%
%
\documentclass{amsart}

\newtheorem{theorem}{Theorem}[section]
\newtheorem{lemma}[theorem]{Lemma}
\newtheorem{proposition}[theorem]{Proposition}
\theoremstyle{definition}
\newtheorem{definition}[theorem]{Definition}
\newtheorem{example}[theorem]{Example}

\theoremstyle{remark}

\usepackage{epsfig}



\reversemarginpar

\begin{document}

\newtheorem{conjecture}{Conjecture}[section]
\newtheorem{corollary}{Corollary}[section]
\newtheorem{Note}{Note}[section]

\theoremstyle{definition}
\newtheorem{rules}{Rule}[section]

\theoremstyle{remark}


\title{An extension of the method of brackets. Part 1.}

\author{Ivan Gonzalez}
\address{Instituto de F\'{i}sica y Astronom\'{i}a, Universidad de Valparaiso, Valparaiso, Chile}
\email{ivan.gonzalez@uv.cl}

\author{Karen Kohl}
\address{Department of Mathematics, University of  Southern Mississippi,  Long Beach, MS 39560}
\email{karen.kohl@usm.edu}

\author{Lin Jiu}
\address{Research Institute for Symbolic Computation, Johannes Kepler University,
Linz, Austria}
\email{ljiu@risc.uni-linz.ac.at}

\author{Victor H. Moll}
\address{Department of Mathematics,
Tulane University, New Orleans, LA 70118}
\email{vhm@math.tulane.edu}

\subjclass{Primary 33C05, Secondary 33F10}

\keywords{Definite integrals, method of brackets, divergent series}

\numberwithin{equation}{section}

\newcommand{\imagpart}{\mathop{\rm Im}\nolimits}
\newcommand{\realpart}{\mathop{\rm Re}\nolimits}
\newcommand{\no}{\noindent}
\newcommand{\ift}{\int_{0}^{\infty}}
\newcommand{\ione}{\int_{0}^{1}}
\newcommand{\eqf}{\stackrel{\bullet}{=}}
\newcommand{\op}[1]{\ensuremath{\operatorname{#1}}}
\newcommand{\pFq}[5]{\ensuremath{{}_{#1}F_{#2} \left( \genfrac{}{}{0pt}{}{#3}
{#4} \bigg| {#5} \right)}}

\begin{abstract}
The method of brackets is an efficient method for the evaluation of a large class of definite integrals on the 
half-line. It is based on a small collection of rules, some of which are heuristic. The extension discussed here 
is based on the concepts of null and  divergent series. These are formal representations of 
functions, whose coefficients $a_{n}$ have meromorphic  representations for $n \in \mathbb{C}$,  but might
vanish or blow up when $n \in \mathbb{N}$. These ideas are illustrated with the evaluation of a variety of 
entries from the classical table of integrals by Gradshteyn and Ryzhik.
\end{abstract}


\maketitle

\section{Introduction} 
\label{sec-intro}
\setcounter{equation}{0}

The evaluation of definite integrals have a long history dating from the work of Eudoxus of Cnidus (408-355 BC) 
with the creation of the method of exhaustion.  The history of this problem is reported in 
\cite{kallio-1966a}.  A large variety of methods developed for the evaluations of integrals may be found in 
older Calculus textbooks, such as those by J.~Edwards \cite{edwardsj-1922a,edwardsj-1922b}. As the 
number of examples grew, they began to be collected in \textit{tables of integrals}. The table compiled by I.~S.~Gradshteyn and I.~M.~Ryzhik \cite{gradshteyn-2015a} is the most widely used one, 
now in its $8^{th}$-edition.

\medskip

The interest of the last author in this topic began with entry $3.248.5$ in \cite{gradshteyn-2000a}
\begin{equation}
I  =  \ift (1+x^2)^{-3/2} \left[ \varphi(x) + \sqrt{\varphi(x)}  
\right]^{-1/2} \, dx \label{i1}
\end{equation}
\noindent
where $\displaystyle{ \varphi(x)  =  1 + \tfrac{4}{3}x^{2}(1+x^{2})^{-2}.} $ The value $\pi/2 \sqrt{6}$ given in the table is 
incorrect, as a  direct numerical evaluation will confirm. Since an evaluation of the integral still elude us, the
 editors of the table found an ingenious temporary solution to
this problem: it does not 
appear in \cite{gradshteyn-2007a} nor in the latest edition \cite{gradshteyn-2015a}.  This motivated an effort to present 
proofs of all entries in Gradshteyn-Ryzhik. It began with \cite{moll-2007a} and has continued with several short papers.
These have appeared in \textit{Revista Scientia}, the latest one being \cite{amdeberhan-2016b}.

\medskip

The work presented here deals with the  \textit{method of brackets}. This is a new method for integration 
developed  in \cite{gonzalez-2007a,gonzalez-2008a,gonzalez-2009a} in the context of integrals 
arising from Feynman diagrams. It consists of a small number of rules that converts the integrand into a 
collection of series. These rules are reviewed in Section \ref{sec-method}, it is important to emphasize that 
 \texttt{most of these rules are still not rigorously justified and currently should be considered a collection of 
 heuristic rules}. 

\medskip 

The success of the method depends on the ability to give closed-form expressions for 
these series. Some of these heuristic rules are currently being placed on solid 
ground \cite{amdeberhan-2012b}. The reader will find in 
\cite{gonzalez-2014a,gonzalez-2010a,gonzalez-2010b} a large collection of examples 
that illustrate the power and flexibility of this method. 

\medskip

The operational rules are described in Section \ref{sec-method}. The method applies to functions that can be 
expanded in a formal power series 
\begin{equation}
f(x)=\sum_{n=0}^{\infty}a(n)x^{\alpha n+\beta-1},
\label{series-f}
\end{equation}
\noindent
where $\alpha, \, \beta \in \mathbb{C}$ and the coefficients $a(n)\in \mathbb{C}$. (The extra $-1$ in the 
exponent is for a convenient formulation of the operational rules). The adjective \textit{formal}
refers to the fact that the expansion is used to integrate over $[0, \infty)$, even though it might be 
valid only on a proper subset of the half-line. 

\begin{Note}
\label{note-required}
There is no precise description of the complete class of functions $f$ for which the method can be applied. At the 
moment, it is a working assumption, that the coefficients $a(n)$ in \eqref{series-f} are expressions that admit a
 unique meromorphic continuation to $n \in \mathbb{C}$. This is required, since the method involves the evaluation
  of $a(n)$ for $n$ not a natural number, hence an extension 
is needed. For example, the Bessel function 
\begin{equation}
I_{0}(x) = \sum_{n=0}^{\infty} \frac{1}{n!^{2}} \left( \frac{x}{2} \right)^{2n}
\label{bessel-i0}
\end{equation}
\noindent
has $\alpha = 2, \, \, \beta = 1$ and $\displaystyle{ a(n) = 1/2^{2n} n!^{2}}$ can be written as 
$\displaystyle{ a(n) = 1/2^{2n} \Gamma^{2}(n+1)}$ and now the evaluation, say at $n = \tfrac{1}{2}$, is possible. The 
same observation holds for the Bessel function
\begin{equation}
J_{0}(x) = \sum_{n=0}^{\infty} \frac{(-1)^{n}}{n!^{2}} \left( \frac{x}{2} \right)^{2n}.
\label{bessel-j0a}
\end{equation}
\end{Note}

The goal of the present work is to produce \textit{non-classical series representations} for  functions $f$, which do not 
have expansions like \eqref{series-f}. These representations are formally of the type \eqref{series-f} but  some of the
coefficients $a(n)$ might be null or divergent.   The examples show how to use these 
representations in conjunction 
with the method of brackets to evaluate definite integrals. The examples presented here come from the table 
\cite{gradshteyn-2015a}. This process is, up to now, completely heuristic. These non-classical series are classified 
according to the following types:

\medskip

\noindent
$1) \, $ \texttt{Totally (partially) divergent series}. Each term (some of the terms) in the series is a 
divergent value. For example, 
\begin{equation}
\sum_{n=0}^{\infty} \Gamma(-n) x^{n} \text{ and } \sum_{n=0}^{\infty} \frac{ \Gamma(n-3)}{n!} x^{n}.
\end{equation}

\smallskip

\noindent
$2) \, $ \texttt{Totally (partially) null series}. Each term (some of the terms) in the series vanishes. For example,
\begin{equation}
\sum_{n=0}^{\infty} \frac{1}{\Gamma(-n)} x^{n} \text{ and } \sum_{n=0}^{\infty} \frac{1}{\Gamma(3-n)} x^{n}.
\end{equation}
\noindent
This type includes series where all but finitely many terms vanish. These are polynomials in the corresponding 
variable.

\smallskip

\noindent
$3) \, $  \texttt{Formally divergent series}. This is a classical divergent series: the terms are finite but the sum of 
the series diverges. For example,
\begin{equation}
\sum_{n=0}^{\infty} \frac{ n! ^{2} }{(n+1) \, (2n)!}  5^{n}.
\end{equation}

In spite of the divergence of these series, they will be used in combination with the method of brackets to evaluate 
a variety of definite integrals. Examples of these type of series are given next. 

\smallskip

Some examples of functions that admit non-classical representations are given next.

\smallskip 

\noindent
$\bullet$ The \textit{exponential integral} with the partially divergent  series 
\begin{equation}
\text{Ei}(-x) = - \int_{1}^{\infty} t^{-1} e^{-xt} \, dt = \sum_{n=0}^{\infty} (-1)^{n} \frac{x^{n}}{n \, n!}.
\label{ei-null}
\end{equation}

\smallskip 

\noindent
$\bullet$ The \textit{Bessel $K_{0}$-function}
\begin{equation}
K_{0}(x) = \int_{0}^{\infty} \frac{ \cos x t \, dt}{(t^{2}+1)^{1/2}}
\end{equation}
\noindent
with totally null representation 
\begin{equation}
K_{0}(x) = \frac{1}{x} \sum_{n=0}^{\infty} (-1)^{n}  \frac{\Gamma^{2}(n + \tfrac{1}{2} )}{n! \, \Gamma(-n)} 
\left( \frac{4}{x^{2}} \right)^{n}
\label{k0-null}
\end{equation}
\noindent
and the totally divergent one 
\begin{equation}
K_{0}(x) = \frac{1}{2} \sum_{n=0}^{\infty} (-1)^{n} \frac{\Gamma(-n)}{n!} \left( \frac{x^{2}}{4} \right)^{n}.
\label{k0-divergent}
\end{equation}

Section \ref{sec-method} presents the rules of the method of brackets. Section \ref{sec-independence} shows that 
the bracket series associated to an integral is independent of the presentation of the integrand. The remaining
sections use the method of brackets and non-classical series to evaluate definite integrals. Section \ref{sec-expi}
contains the exponential integral $\text{Ei}(-x)$ in the integrand, Section \ref{sec-tricomi} has the
 Tricomi function $U(a,b;x)$ (as an 
example of the confluent hypergeometric function), Section \ref{sec-airy} is dedicated to integrals with the 
Airy function $\text{Ai}(x)$ and then Section \ref{sec-bessel-nu} has the Bessel function $K_{\nu}(x)$, with 
special emphasis on $K_{0}(x)$. Section \ref{sec-producing} gives examples of definite integral 
whose value contains the Bessel function  $K_{\nu}(x)$. The final section has a new approach to the evaluation 
of bracket series, based on a differential equation involving parameters. 

The examples presented in the current work have appeared in the literature, where the reader will find 
proofs of these formulas by classical methods.  One of the goals of this work is to illustrate the flexibility 
of the method of brackets to evaluated these integrals.

\section{The method of brackets}
\label{sec-method}
\setcounter{equation}{0}

The method of brackets evaluates integrals 
over the half line $[0, \, \infty)$. It is  based on a small number of rules reviewed in this section. 

\begin{definition}
For $a \in \mathbb{C}$, the symbol 
\begin{equation}
\langle a \rangle =  \ift x^{a-1} \, dx 
\end{equation}
is the {\em bracket} associated to the (divergent) integral on the right. The 
symbol 
\begin{equation}
\phi_{n} = \frac{(-1)^{n}}{\Gamma(n+1)}
\end{equation}
\noindent
is called the {\em indicator} associated to the index $n$. The notation
$\phi_{n_{1}n_{2}\cdots n_{r}}$, or simply 
$\phi_{12 \cdots r}$, denotes the product 
$\phi_{n_{1}} \phi_{n_{2}} \cdots \phi_{n_{r}}$. 
\end{definition}

\begin{Note}
The indicator $\phi_{n}$ will be used in the series expressions used in the method of brackets. For instance
\eqref{ei-null} is written as 
\begin{equation}
\text{Ei}(-x) =  \sum_{n} \phi_{n} \frac{x^{n}}{n}
\label{ei-null1}
\end{equation}
\noindent
and \eqref{k0-divergent} as 
\begin{equation}
K_{0}(x) = \frac{1}{2} \sum_{n} \phi_{n}  \Gamma(-n) \left( \frac{x^{2}}{4} \right)^{n}.
\label{k0-divergent1}
\end{equation}
\noindent
In the process of implementing the method of brackets, these series will be evaluated for $n \in \mathbb{C}$, not 
necessarily positive integers. Thus the notation for the indices does not include its range of values. 
\end{Note}

\medskip

\noindent
{\bf {\em Rules for the production of bracket series}}

The first part of the method is to associate to the integral  
\begin{equation}
I (f) = \int_{0}^{\infty} f(x) \, dx
\end{equation}
\noindent
 a bracket series. This is done following two rules:

\smallskip

\noindent
${\mathbf{Rule \, \, P_{1}}}$.  Assume $f$ has the expansion
\begin{equation}
f(x)=\sum_{n=0}^{\infty}\phi_{n} a(n)x^{\alpha n+\beta-1}.
\end{equation}
Then $I(f)$ is assigned the  \emph{bracket series }
\begin{equation}
I(f) =\sum_{n}\phi_{n}a(n)\left\langle \alpha n+\beta\right\rangle .
\end{equation}

\begin{Note}
The series including the indicator $\phi_{n}$ have indices \textit{without} limits, since its 
evaluation requires to take $n$ outside $\mathbb{N}$.
\end{Note}

\smallskip

\noindent
${\mathbf{Rule \, \, P_{2}}}$. 
For $\alpha \in \mathbb{C}$, the multinomial power 
$(u_{1} + u_{2} + \cdots + u_{r})^{\alpha}$ is assigned the 
$r$-dimension bracket series 
\begin{equation}
\sum_{n_{1},n_{2},\ldots, n_{r}}
\phi_{n_{1}\, n_{2} \,  \cdots n_{r}}
u_{1}^{n_{1}} \cdots u_{r}^{n_{r}} 
\frac{\langle -\alpha + n_{1} + \cdots + n_{r} \rangle}{\Gamma(-\alpha)}.
\end{equation}
\noindent
The integer $r$ is called \textit{the dimension of the bracket series}.

\medskip

\noindent
{\bf {\em Rules for the evaluation of a bracket series}}

The next set of rules associates a complex number to a bracket series. 

\smallskip

\noindent
${\mathbf{Rule \, \, E_{1}}}$. 
The one-dimensional bracket series is assigned the value 
\begin{equation}
\sum_{n} \phi_{n} a(n) \langle \alpha n + b \rangle = 
\frac{1}{|\alpha|} a(n^{*}) \Gamma(-n^{*}),
\end{equation}
\noindent
where $n^{*}$ is obtained from the vanishing of the bracket; that is, $n^{*}$ 
solves $an+b = 0$. 

\begin{Note}
The rule $E_{1}$ is a version of the \textit{Ramanujan's Master Theorem}.  This theorem requires an 
extension of the coefficients $a(n)$ from $n \in \mathbb{N}$ to $n \in \mathbb{C}$. The assumptions imposed on 
the function $f$ is precisely for the application of this result. A complete justification of this 
rule is provided in \cite{amdeberhan-2012b}. \textit{Making the  remaining rules rigorous is the subject of active research. }
\end{Note}

\smallskip

The next rule provides a value for multi-dimensional bracket series where 
 the number of sums is equal to the number of brackets. 

\smallskip

\noindent
${\mathbf{Rule \, \, E_{2}}}$. 
Assume the matrix
$B = (b_{ij})$ is non-singular, then the assignment is 
\begin{equation}
\sum_{n_{1},n_{2}, \cdots,n_{r}}  \phi_{n_{1} \cdots n_{r}} 
a(n_{1},\cdots,n_{r}) 
\langle b_{11}n_{1} + \cdots + b_{1r}n_{r} + c_{1} \rangle \cdots
\langle b_{r1}n_{1} + \cdots + b_{rr}n_{r} + c_{r} \rangle 
\nonumber
\end{equation}
\begin{equation}
=  \frac{1}{| \text{det}(B) |} a(n_{1}^{*}, \cdots n_{r}^{*}) 
\Gamma(-n_{1}^{*}) \cdots \Gamma(-n_{r}^{*}) 
\nonumber
\end{equation}
\noindent
where $\{ n_{i}^{*} \}$ 
is the (unique) solution of the linear system obtained from the vanishing of 
the brackets. There is no assignment if $B$ is singular. 

\smallskip

\noindent
${\mathbf{Rule \, \, E_{3}}}$. 
Each representation of an integral by a bracket series has
associated an {\em index of the representation} via 
\begin{equation}
\text{index } = \text{number of sums } - \text{ number of brackets}.
\end{equation}
\noindent
In the case of a multi-dimensional bracket series of positive index, then the system generated by the vanishing of 
the coefficients has a number of free parameters. The solution is 
obtained by computing all the contributions of maximal rank in the system by selecting these free 
parameters. Series expressed in the same variable (or argument)  are added. 

\begin{example}
A generic bracket series of index $1$ has the form 
\begin{equation}
\sum_{n_{1}, \, n_{2}} \phi_{n_{1},n_{2}} C(n_{1},n_{2}) A^{n_{1}} B^{n_{2}} 
\langle a_{11}n_{1} + a_{12}n_{2} + c_{1} \rangle,
\end{equation}
\noindent
where $a_{11}, \, a_{12}, \, c_{1}$ are fixed coefficients, $A, \, B$ are parameters and $C(n_{1},n_{2})$ is a function of the indices.

The Rule $E_{3}$ is used to generate two series by leaving first $n_{1}$ and then $n_{2}$ as free parameters. The Rule $E_{1}$ is 
used to assign a value to the corresponding series:

\smallskip

\noindent
$n_{1}$ as a free parameter produces 
\begin{equation*}
T_{1} = \frac{B^{-c_{1}/a_{12}}}{|a_{12}|} \sum_{n_{1}=0}^{\infty} \phi_{n_{1}} 
\Gamma \left( \frac{a_{11}n_{1}+c_{1}}{a_{12}} \right) 
C\left( n_{1}, - \frac{a_{11}n_{1} + c_{1}}{a_{12}} \right) \left(AB^{-a_{11}/a_{12}} \right)^{n_{1}};
\end{equation*}

\smallskip

\noindent
$n_{2}$ as a free parameter produces 
\begin{equation*}
T_{2} = \frac{A^{-c_{1}/a_{11}}}{|a_{11}|} \sum_{n_{2}=0}^{\infty} \phi_{n_{2}} 
\Gamma \left( \frac{a_{12}n_{2}+c_{1}}{a_{11}} \right) 
C\left( - \frac{a_{12}n_{2} + c_{1}}{a_{11}}, n_{2} \right) \left( BA^{-a_{12}/a_{11}} \right)^{n_{2}}.
\end{equation*}

\smallskip

The series $T_{1}$ and $T_{2}$ are expansions of the solution in terms of different parameters 
\begin{equation}
x_{1}  = AB^{-a_{11}/a_{12}} \,\,\, {\rm{ and }} \,\,\, x_{2} = BA^{-a_{12}/a_{11}}. 
\end{equation}
\noindent
Observe that $x_{2} = x_{1}^{a_{12}/a_{11}}$. Therefore 
the bracket series is assigned the value $T_{1}$ \textit{or} $T_{2}$. If one of the series is a null-series or divergent, it is 
discarded.  If \textit{both} series are discarded, the method of brackets does not produce a value for the integral that generates the 
bracket series.

\smallskip

Some special cases will clarify the rules to follow in the use of the series $T_{1}$ and $T_{2}$. Suppose $a_{12} = -a_{11}$, then 
\begin{equation}
T_{1} = \frac{B^{-c_{1}/a_{11}}}{|a_{11}|} 
\sum_{n_{1}=0}^{\infty} \phi_{n_{1}} \Gamma \left(  n_{1} + \frac{c_{1}}{a_{11}} \right) 
C \left( n_{1}, -n_{1} - \frac{c_{1}}{a_{11}} \right) (AB)^{n_{1}}
\end{equation}
\noindent
and 
\begin{equation}
T_{2} = \frac{A^{-c_{1}/a_{11}}}{|a_{11}|} 
\sum_{n_{2}=0}^{\infty} \phi_{n_{2}} \Gamma \left(  n_{2} + \frac{c_{1}}{a_{11}} \right) 
C \left( -n_{2} -  \frac{c_{1}}{a_{11}}, n_{2} \right) (AB)^{n_{2}}
\end{equation}
\noindent
and since both series are expansions in the same parameter $( AB )$,  \textit{their values must be added}  to compute the 
value associated to the bracket series.  On the other hand, if $a_{12} = -2a_{11}$, then 
\begin{equation*}
T_{1} = \frac{B^{c_{1}/2a_{11}}}{2 |a_{11}|} \sum_{n_{1}=0}^{\infty} \phi_{n_{1}} 
\Gamma \left( - \frac{1}{2} n_{1} - \frac{c_{1}}{2a_{11}} \right) 
C \left( n_{1},  \frac{1}{2} n_{1} + \frac{c_{1}}{2a_{11}} \right) 
\left( A B^{1/2} \right)^{n_{1}}
\end{equation*}
\noindent
and 
\begin{equation*}
T_{2} = \frac{A^{-c_{1}/a_{11}}}{ |a_{11}|} \sum_{n_{2}=0}^{\infty} \phi_{n_{2}} 
\Gamma \left( -2  n_{2} + \frac{c_{1}}{a_{11}} \right) 
C \left( 2 n_{2} - \frac{c_{1}}{a_{11}}, n_{2} \right) 
\left( A^{2} B \right)^{n_{2}}.
\end{equation*}
\noindent 
Splitting the sum in $T_{1}$ according to the parity of the  indices produces a power series in $A^{2}B$ when 
$n_{1} = 2 n_{3}$ is  even and for $n_{1}$ odd a second power series in the same argument $A^{2}B$ times an extra factor 
$AB^{1/2}$. Since these are expansions in the same argument, they have to be added to count their contribution to the 
bracket series. 
\end{example}

\smallskip

\begin{Note}
It is important to observe that the index is attached to a specific 
representation of the integral and not just to integral itself.  The 
experience obtained by the authors using this method suggests that, among 
all representations of an integral as a bracket series, the one with 
{\em minimal index} should be chosen. 
\end{Note}

\begin{Note}
 The extension presented in this work shows how 
to use these divergent series in the evaluation of definite integrals. Example \ref{example-6-222} illustrates 
this procedure.
\end{Note}

\smallskip

\noindent
${\mathbf{Rule \,\, E_{4}}}$. In the evaluation of a bracket series, repeated series are counted only once. For instance, a 
convergent series appearing repeated  in the 
same region of convergence should be counted only once. The same treatment should be given to null and 
divergent series.

\begin{Note}
Example \ref{ex-rule-e4}  in Section \ref{sec-expi} illustrates the use of this rule.
\end{Note}

\begin{Note}
A systematic procedure in the simplification of the series  has been used throughout the 
literature: express factorials in terms of the gamma function and the transform quotients of gamma terms into 
Pochhammer symbols,  defined by
\begin{equation}
(a)_{k} = a(a+1) \cdots (a+k-1) = \frac{\Gamma(a+k)}{\Gamma(a)}.
\label{gamma-poch}
\end{equation}
\noindent
Any presence of a Pochhammer with a negative index $k$ is transformed by the rule 
\begin{equation}
(a)_{-k} = \frac{(-1)^{k}}{(1-a)_{k}}, \quad \text{ for } k \in \mathbb{N}.
\label{rule-11}
\end{equation}
\noindent
In the special case when $a$  \textit{is also} a negative integer, the rule 
\begin{equation}
(-km)_{-m} = \frac{k}{k+1} \cdot \frac{(-1)^{m} (km)!}{((k+1)m)!}
\end{equation}
\noindent
holds. This value is justified in \cite{gonzalez-2016a}. The duplication formula
\begin{equation}
(a)_{2n} = 2^{2n} \left( \frac{a}{2} \right)_{n} \left( \frac{a+1}{2} \right)_{n}
\label{poch-dupl}
\end{equation}
\noindent
is also used in the simplifications.

Many of the evaluations are given as values of the hypergeometric functions 
\begin{equation}
 _{p}F_{q}\left(\genfrac{}{}{0pt}{}{a_{1},\ldots,a_{p}}{b_{1},\ldots,b_{q}}\bigg{|}z\right) = 
\sum_{n=0}^{\infty} \frac{(a_{1})_{n} \cdots (a_{p})_{n}}{(b_{1})_{n} \cdots (b_{q})_{n} } \frac{z^{n}}{n!},
\label{hyper-def}
\end{equation}
\noindent
with $(a)_{n}$ as in \eqref{gamma-poch}. It is often that the value of $_{2}F_{1}$ at $z=1$ is required. This is given 
by the classical formula of Gauss:
\begin{equation}
 \label{gauss-value}
 \pFq21{a  \,\,\, b}{c}{1}  = \frac{\Gamma(c) \Gamma(c-a-b)}{\Gamma(c-a) \, \Gamma(c-b)}.
 \end{equation}

\end{Note}

\begin{Note}
The extension considered here is to use  the method of brackets to functions that do not admit a series 
representation as described in Rule $P_{1}$.  For example, the Bessel function $K_{0}(x)$ has a singular expansion of 
the form 
\begin{equation}
\label{exp-k0}
K_{0}(x) = - \left( \gamma - \ln 2 +  \ln x \right) I_{0}(x) + 
\sum_{j=0}^{\infty} \frac{H_{j}}{j!^{2}}  \frac{x^{2j}}{2^{2j}} 
\end{equation}
\noindent
(see  \cite[10.31.2]{olver-2010a}). Here 
$I_{0}(x)$ is the Bessel 
function given in \eqref{bessel-i0},
$\begin{displaystyle}H_{j} = \sum_{k=1}^{j} \frac{1}{k}\end{displaystyle}$ is the harmonic 
number and  $\gamma = \lim\limits_{j \to \infty}  \left( H_{j} - \ln j \right)$ is Euler's constant. The presence of the logarithm 
term in \eqref{exp-k0} does not permit a direct application of the 
method of brackets. An alternative is presented in  Section \ref{sec-bessel-nu}.
\end{Note}

\section{Independence of the factorization}
\label{sec-independence}
\setcounter{equation}{0}

The evaluation of a definite integral by the method of brackets begins with the association of a bracket series to 
the integral.  It is common that the integrand contains several factors from which the bracket series 
is generated. This representation is not unique. For example, the integral 
\begin{equation}
\label{int-j01}
I = \int_{0}^{\infty} e^{-ax} J_{0}(x) \, dx 
\end{equation}
\noindent
is associated the bracket series 
\begin{equation}
\sum_{n_{1},n_{2}} \phi_{n_{1},n_{2}} \frac{a^{n_{1}}}{2^{2n_{2}} \Gamma(n_{2}+1)}
\langle n_{1} + 2n_{2} + 1 \rangle,
\end{equation}
\noindent 
and rewriting \eqref{int-j01} as 
\begin{equation}
I = \int_{0}^{\infty} e^{-ax/2} e^{-ax/2} J_{0}(x) \, dx,
\end{equation}
\noindent
provides the second  bracket series 
\begin{equation}
\sum_{n_{1},n_{2},n_{3}} \phi_{n_{1},n_{2},n_{3}} 
\frac{a^{n_{1}+n_{2}}}{2^{n_{1}+n_{2}+2n_{3}} \Gamma(n_{3}+1)} 
\langle n_{1} + n_{2} + 2n_{3} + 1 \rangle
\end{equation}
\noindent
associated to \eqref{int-j01}. It is shown next that all such bracket series representations of an integral 
produce the same the value.

\begin{theorem}
\label{thm:TwoFactors}
Assume  $f(x)=g(x)h(x)$, where  $f, \, g \text{ and }h$ have expansions as in \eqref{series-f}.  Then, the 
method of brackets assigns  the same value to the  integrals
\begin{equation}
I_{1}=\int_{0}^{\infty}f(x) \, dx\text{ and }I_{2}=\int_{0}^{\infty}g(x)h(x) \, dx.
\end{equation}
\end{theorem}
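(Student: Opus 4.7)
The plan is to compute, via Rules $P_1$ through $E_4$ of Section \ref{sec-method}, the numerical value that the method assigns to each of $I_1$ and $I_2$, and to show that these two values coincide after a short Pochhammer manipulation.

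First I would record the expansions
\[
g(x)=\sum_{n_1}\phi_{n_1}b(n_1)\,x^{\alpha_1 n_1+\beta_1-1},\qquad h(x)=\sum_{n_2}\phi_{n_2}c(n_2)\,x^{\alpha_2 n_2+\beta_2-1},
\]
and form their Cauchy product. Using $\phi_{n_1}\phi_{n_2}=\binom{n}{n_1}\phi_n$ with $n=n_1+n_2$, the product has the form \eqref{series-f} precisely when $\alpha_1=\alpha_2=\alpha$; in that case $\beta=\beta_1+\beta_2-1$ and $a(n)=\sum_{k}\binom{n}{k}b(k)c(n-k)$. Rule $P_1$ applied to $I_1$ then yields the one-dimensional bracket series $\sum_n\phi_n a(n)\langle\alpha n+\beta\rangle$, whose value by Rule $E_1$ is $|\alpha|^{-1}a(n^*)\Gamma(-n^*)$, where $n^*=-\beta/\alpha$.

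Next, applying Rule $P_1$ to $I_2$ without merging the two series produces the two-dimensional bracket series
\[
\sum_{n_1,n_2}\phi_{n_1 n_2}\,b(n_1)c(n_2)\,\langle\alpha n_1+\alpha n_2+\beta_1+\beta_2-1\rangle,
\]
of index one. Rule $E_3$ is invoked by taking $n_1$ as the free parameter: the bracket vanishes at $n_2^*=n^*-n_1$, and Rule $E_1$ collapses the inner sum to
\[
T_1=\frac{1}{|\alpha|}\sum_{n_1}\phi_{n_1}\,b(n_1)\,c(n^*-n_1)\,\Gamma(n_1-n^*).
\]
The crucial identification is that $T_1=|\alpha|^{-1}a(n^*)\Gamma(-n^*)$. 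This follows by writing $\binom{n^*}{k}=(-1)^k(-n^*)_k/k!$ and using $(-n^*)_k\Gamma(-n^*)=\Gamma(k-n^*)$, which turns the convolution defining $a(n^*)$ into the series $T_1$ term by term. The alternative choice $n_2$ free yields a series of the same shape with $b$ and $c$ exchanged and the same value; Rule $E_3$ permits assigning the bracket series either of these equal values, and Rule $E_4$ guarantees that if they are to be combined they are counted only once. Thus $I_1$ and $I_2$ receive the same value.

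The step I expect to be the main obstacle is not the algebra but the underlying analytic hypothesis: the meromorphic continuation of the convolution $n\mapsto\sum_{k}\binom{n}{k}b(k)c(n-k)$ to the (generally non-integer) value $n=n^*$ must coincide with the meromorphic continuation of the coefficient $a(n)$ in the assumed expansion of $f$. This is precisely the issue flagged in Note \ref{note-required} and is needed to legitimize the use of Rule $E_1$, which itself rests on Ramanujan's Master Theorem. A related subtlety is that the hypothesis $f=gh$ together with \eqref{series-f}-type expansions for all three functions tacitly forces $\alpha_1=\alpha_2$; without this compatibility the product is not supported on a single arithmetic progression of exponents and $f$ would fail to have the required form.
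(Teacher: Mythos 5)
Your proposal is correct and follows essentially the same route as the paper: form the Cauchy product to identify $a(n)$ as the binomial convolution of $b$ and $c$, evaluate both bracket series, and reduce the equality $I_1=I_2$ to the identity $a(n^*)\Gamma(-n^*)=\sum_{k}\phi_k\,b(k)\,c(n^*-k)\,\Gamma(k-n^*)$, whose validity at non-integer $n^*$ rests on the meromorphic-continuation hypothesis of Note \ref{note-required} — exactly the point the paper also defers to that Note. Your explicit remark that the hypotheses tacitly force $\alpha_1=\alpha_2$ is a small clarification the paper builds in silently by writing all three expansions with the same $\alpha$.
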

\begin{proof}
Suppose that 
\begin{eqnarray*}
f(x) & = &\displaystyle{ \sum_{n} \phi_{n}a(n)x^{\alpha n+\beta} } \label{expan-f}\\
g(x)& = &\displaystyle{ \sum_{n_{1}} \phi_{n_{1}}b\left(n_{1}\right)x^{\alpha n_{1}+\beta_{1}}} \\
h(x)& = &\displaystyle{ \sum_{n_{2}}\phi_{n_{2}}c\left(n_{2}\right)x^{\alpha n_{2}+\beta_{2}} }.
\end{eqnarray*}
Then
\begin{equation}
\label{value-I1}
I_{1}=\int_{0}^{\infty}f(x) dx=\sum_{n}\phi_{n}a\left(n\right)\left\langle \alpha n+\beta + 1 \right\rangle =
\frac{1}{|\alpha|} a(-s)\Gamma(s),
\end{equation}
\noindent
with $s = (1+\beta)/\alpha$.

\medskip

To evaluate the second integral, observe that 
\begin{eqnarray*}
g(x)h(x) & = & x^{\beta_{1}+\beta_{2}} \left( \sum_{n_{1}=0}^{\infty} \frac{(-1)^{n_{1}}}{n_{1}!}
b(n_{1}) x^{\alpha n_{1}} \right)
 \left( \sum_{n_{2}=0}^{\infty} \frac{(-1)^{n_{2}}}{n_{2}!} c(n_{2}) x^{\alpha n_{2}} \right) \\
 & = & x^{\beta_{1}+\beta_{2}} \sum_{n=0}^{\infty} F(n) x^{\alpha n},  \nonumber 
 \end{eqnarray*}
 \noindent
 with 
 \begin{eqnarray}
 F(n) & = & \sum_{k=0}^{n} \frac{(-1)^{k}}{k!}  b(k) \frac{(-1)^{n-k}}{(n-k)!} c(n-k) \\
 & = & \frac{(-1)^{n}}{n!} \sum_{k=0}^{n} \binom{n}{k} b(k) c(n-k). \nonumber 
 \end{eqnarray}
 \noindent
 This yields 
 \begin{equation}
 f(x) = \sum_{n=0}^{\infty} \frac{(-1)^{n}}{n!}\left[ \sum_{k=0}^{n} \binom{n}{k} b(k) c(n-k) \right] x^{\alpha n + \beta_{1} + 
 \beta_{2}} 
 \end{equation}
 \noindent
 and matching this with \eqref{expan-f} gives $\beta = \beta_{1}+\beta_{2}$ and 
 \begin{equation}
 a(n)   =  \sum_{k=0}^{n} \binom{n}{k} b(k) c(n-k) 
    =  \sum_{k=0}^{\infty} \frac{(-1)^{k}}{k!} \,  (-n)_{k} b(k)c(n-k). \label{identity-2}
   \end{equation}
   
   Now, the method of brackets gives 
   \begin{equation}
  I_{2} =  \int_{0}^{\infty} g(x)h(x) \, dx = \sum_{n_{1},n_{2}} \phi_{n_{1},n_{2}} b(n_{1}) c(n_{2}) 
   \langle \alpha n_{1} + \alpha n_{2} + \beta + 1 \rangle
   \end{equation}
   \noindent
   and it yields two series as solutions
   \begin{eqnarray}
   T_{1} & = & \frac{1}{| \alpha |} \sum_{n} \phi_{n} \Gamma \left( n + s  \right) 
   b(n) c(-n - s) \\
    T_{2} & = & \frac{1}{| \alpha |} \sum_{n} \phi_{n} \Gamma \left( n + s  \right) 
   b(-n-s) c(n), \nonumber 
   \end{eqnarray}
   \noindent
   with $s = (\beta+1)/\alpha$. Comparing with \eqref{value-I1} shows that $I_{1} = I_{2}$ is equivalent to 
   \begin{equation}
   \label{required-1}
   \Gamma(s) a( - s) = 
    \sum_{n} \phi_{n} \Gamma(n+s) b(n) c(-s-n),
    \end{equation}
    \noindent
    that is, 
    \begin{equation}
    a(-s) = \sum_{n} \phi_{n} (s)_{n} b(n) c(-s-n).
    \label{required-1}
    \end{equation}
    \noindent
    The identity \eqref{required-1} is the extension of \eqref{identity-2} from $n \in \mathbb{N}$ to $s \in \mathbb{C}$.
    This extension is part of the requirements on the functions $f$ explained in Note \ref{note-required}. The proof 
    is complete.
\end{proof}
\smallskip{}

It is direct to extend the result to the case of a finite number of factors.

\begin{theorem}
\label{thm:Independence} Assume $f$ admits a representation of the
form $f\left(x\right)=\overset{r}{\underset{i=1}{\prod}}f_{i}\left(x\right)$.
Then the value of the integral, obtained by method of brackets, is
the same for both series representations.
\end{theorem}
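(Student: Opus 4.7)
The natural approach is induction on $r$, with Theorem \ref{thm:TwoFactors} as the base case. The case $r=1$ is vacuous, and $r=2$ is exactly the content of Theorem \ref{thm:TwoFactors}, which asserts that the $1$-dimensional bracket series coming from $f$ itself agrees with the $2$-dimensional bracket series coming from a factorization $f = f_{1} f_{2}$.

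For the inductive step, I assume the statement holds for any product of at most $r-1$ factors, and take $f = \prod_{i=1}^{r} f_{i}$. Set $g = \prod_{i=1}^{r-1} f_{i}$, so that $f = g \cdot f_{r}$. The first observation needed is that $g$ itself admits a representation of the form \eqref{series-f}: its coefficients are obtained by iterating the Cauchy convolution computed inside the proof of Theorem \ref{thm:TwoFactors}, and the resulting coefficient sequence admits a meromorphic continuation in the sense of Note \ref{note-required} because the same standing hypothesis is assumed for each individual $f_{i}$.

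With $g$ so represented, the proof concludes in two steps. Theorem \ref{thm:TwoFactors} applied to $f = g \cdot f_{r}$ shows that the $2$-dimensional bracket series attached to the ordered pair $(g, f_{r})$ evaluates to the same value as the $1$-dimensional bracket series attached to $f$. On the other hand, the inductive hypothesis applied to the factorization $g = \prod_{i=1}^{r-1} f_{i}$ identifies the $(r-1)$-dimensional bracket series of $(f_{1}, \ldots, f_{r-1})$ with the $1$-dimensional one of $g$. Splicing the latter identification into the former, that is, expanding the single $g$-sum in the $(g,f_{r})$ bracket series into its $(r-1)$-indexed counterpart, yields precisely the $r$-dimensional bracket series of $(f_{1}, \ldots, f_{r})$, so all three representations share the same numerical value.

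The main obstacle is to justify this splicing rigorously at the level of Rules $E_{1}$--$E_{3}$. One must verify that, when the $r$-dimensional bracket series of $(f_{1}, \ldots, f_{r})$ is evaluated by selecting free indices via Rule $E_{3}$, the output matches the nested evaluation in which the $(r-1)$-dimensional block associated with $g$ is assigned a value first and subsequently combined with $f_{r}$ through Theorem \ref{thm:TwoFactors}. This amounts to exhibiting a block structure on the bracket matrix $B$ of Rule $E_{2}$, together with an iterated use of the identity
\begin{equation*}
a(-s) = \sum_{n} \phi_{n} \, (s)_{n} \, b(n) \, c(-s-n)
\end{equation*}
established inside the proof of Theorem \ref{thm:TwoFactors}, applied one factor at a time to peel $f_{r}, f_{r-1}, \ldots$ off the product. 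Because that identity is precisely the extension of the Cauchy-convolution relation from $\mathbb{N}$ to $\mathbb{C}$ guaranteed by Note \ref{note-required}, no new analytic input is required beyond what was already assumed for Theorem \ref{thm:TwoFactors}.
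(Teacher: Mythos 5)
Your induction on the number of factors, reducing to Theorem \ref{thm:TwoFactors} via $f = \bigl(\prod_{i=1}^{r-1} f_{i}\bigr)\cdot f_{r}$, is exactly the ``direct extension'' the paper has in mind: the paper offers no proof of Theorem \ref{thm:Independence} beyond the remark that the two-factor case extends immediately, so your argument is, if anything, more explicit than the original. Your candid identification of the remaining issue --- that the nested evaluation (collapse the $(r-1)$-block first, then apply the two-factor theorem) must agree with the simultaneous evaluation of the $r$-dimensional bracket series under Rules $E_{1}$--$E_{3}$ --- is a genuine subtlety that the paper silently elides, and within the heuristic framework adopted there your appeal to the iterated convolution identity and Note \ref{note-required} is as much justification as the paper itself supplies anywhere.
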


\section{The exponential integral}
\label{sec-expi}
\setcounter{equation}{0}

The \textit{exponential integral function} is defined by the integral formula
\begin{equation}
\text{Ei}(-x) = - \int_{1}^{\infty} \frac{\exp(- x t )}{t} \, dt, \text{ for } x > 0.
\end{equation}
(See \cite[$8.211.1$]{gradshteyn-2015a}).  The method of brackets is now used to produce a non-classical series 
for this function. Start by  replacing 
the exponential function by its power series to obtain
\begin{equation}
\label{ei-series1}
\text{Ei}(-x)  =   - \sum_{n_{1}} \phi_{n_{1}} x^{n_{1}} \int_{1}^{\infty} t^{n_{1}-1} \, dt 
\end{equation}
\noindent
 and then use  the method of brackets  to produce 
 \begin{equation*}
 \int_{1}^{\infty} t^{n_{1}-1} \, dt = \int_{0}^{\infty} (y+1)^{n_{1}-1} \, dy = 
 \sum_{n_{2}, \, n_{3}} \phi_{n_{2}n_{3}} \frac{ \langle - n_{1} + 1 + n_{2} + n_{3} \rangle \, \langle n_{2} +1 \rangle }
 {\Gamma(-n_{1}+1)}.
 \end{equation*}
 \noindent
 Replace this  in \eqref{ei-series1} to obtain 
 \begin{equation}
 \text{Ei}(-x) = - \sum_{n_{1},n_{2},n_{3}} \phi_{n_{1}n_{2}n_{3}} x^{n_{1}} 
 \frac{\langle -n_{1}+1+n_{2}+n_{3} \rangle \,\, \langle n_{2} +1 \rangle }{\Gamma(-n_{1}+1)}.
 \end{equation}
 
 The evaluation of this series by the method of brackets generates two identical terms for $\text{Ei}(-x)$: 
 \begin{equation}
 \text{Ei}(-x) = \sum_{n=0}^{\infty} \frac{(-1)^{n}}{n \, \Gamma(n+1)} x^{n}.
 \label{divergent-ei}
 \end{equation}
 \noindent
 Only one of them is kept, according  to Rule $E_{4}$. This is a partially divergent series (from the value at $n=0$), 
 written as 
 \begin{equation}
 \text{Ei}(-x) = \sum_{n} \phi_{n} \frac{x^{n}}{n}.
 \label{pds-ei1}
 \end{equation}
 
 The next example illustrates how to use this partially divergent series in the evaluation of an integral.
 
 \begin{example}
 Entry $6.223$ of \cite{gradshteyn-2015a} gives the Mellin transform of the exponential integral as 
 \begin{equation}
 \int_{0}^{\infty} x^{\mu-1} \text{Ei}(- b x) \, dx = - \frac{b^{-\mu}}{\mu} \Gamma(\mu).
 \end{equation}
 \noindent
 To verify this, use the partially divergent series \eqref{pds-ei1}  and  the method of brackets to obtain 
 \begin{eqnarray}
 \int_{0}^{\infty} x^{\mu-1} \text{Ei}(-bx) \, dx & = & \sum_{n} \phi_{n} \frac{b^{n}}{n} \int_{0}^{\infty} x^{\mu+n-1} \, dx \\
 & = & \sum_{n} \phi_{n} \frac{b^{n}}{n} \langle \mu + n \rangle \nonumber  \\
 & = & - \frac{b^{-\mu}}{\mu} \Gamma(\mu), \nonumber 
 \end{eqnarray}
 \noindent
 as claimed.
 \end{example}

 \begin{example}
 Entry $6.228.2$ in \cite{gradshteyn-2015a} is 
 \begin{equation}
 \label{formG}
G(\nu,\mu,\beta) =  \int_{0}^{\infty} x^{\nu-1} e^{-\mu x} \text{Ei}(-\beta x) \, dx = 
 - \frac{\Gamma(\nu)}{\nu (\beta + \nu)^{\nu}} \pFq21{1 \,\, \,\, \nu}{\nu+1}{\frac{\mu}{\beta+\mu}}.
 \end{equation}
 \noindent
 The partially divergent series \eqref{pds-ei1} is now used to establish this formula. First form 
 the bracket series 
 \begin{equation}
 G(\nu, \mu, \beta) = \sum_{n_{1},n_{2}} \phi_{n_{1},n_{2}} 
 \frac{\beta^{n_{1}} \mu^{n_{2}}}{n_{1}} \langle n_{1} + n_{2} + \nu \rangle.
 \end{equation}
 \noindent
 Rule $E_{1}$ yields two cases from  the equation $n_{1}+n_{2}+\nu=0$:
 
 \smallskip
 
 \noindent
 \textit{Case 1}: $n_{2} = - n_{1} - \nu$ produces 
 \begin{equation}
 T_{1} = \mu^{-\nu} \sum_{n_{1}=0}^{\infty} \frac{(-1)^{n_{1}}}{n_{1}!} \frac{\Gamma(n_{1}+\nu)}{n_{1}}
 \left(  \frac{\beta}{\mu} \right)^{n_{1}},
 \end{equation}
 \noindent
 which is discarded since it is partially divergent (due to  the term $n_{1}=0$).
 
 \smallskip
 
 \noindent
 \textit{Case 2}: $n_{1} = -n_{2}-\nu$ gives 
 \begin{equation}
 \label{4-11}
 T_{2} = - \beta^{-\nu} \sum_{n_{2}=0}^{\infty} \frac{(-1)^{n_{2}}}{n_{2}!} 
 \left( \frac{\mu}{\beta} \right)^{n_{2}} \frac{\Gamma(n_{2}+\nu)}{n_{2}+\nu},
 \end{equation}
 \noindent
 and using 
 \begin{equation}
 \Gamma(n_{2}+\nu) = (\nu)_{n_{2}} \Gamma(\nu) 
 \text{ and } n_{2}+\nu = \frac{\Gamma(n_{2}+\nu+1)}{\Gamma(n_{2}+\nu)} =
 \frac{(\nu+1)_{n_{2}} \Gamma(\nu+1)}{(\nu)_{n_{2}} \Gamma(\nu)}
 \end{equation}
 \noindent
 equation \eqref{4-11} becomes 
 \begin{eqnarray}
 T_{2} & = & - \frac{\Gamma(\nu)}{\nu \, \beta^{\nu}} \sum_{n_{2}=0}^{\infty} 
 \frac{(\nu)_{n_{2}} (\nu)_{n_{2}}}{n_{2}! \, (\nu+1)_{n_{2}}} 
 \left( - \frac{\mu}{\beta} \right)^{n_{2}} \\
 & = & - \frac{\Gamma(\nu)}{\nu \, \beta^{\nu}} 
 \pFq21{\nu \,\,\,\,\, \nu}{\nu+1}{ - \frac{\mu}{\beta}}. \nonumber 
  \end{eqnarray}
  \noindent
  The condition $|\mu| < |\beta|$ is imposed to guarantee the convergence of the series.  Finally,  the 
  transformation rule (see entry $9.131.1$ in \cite{gradshteyn-2015a})
  \begin{equation}
  \pFq21{\alpha \,\,\,\, \beta}{\gamma}{z} = (1-z)^{-\alpha} \pFq21{\alpha \,\,\,\, \gamma - \beta}{\gamma}
  {\frac{z}{z-1}}
  \end{equation}
  \noindent
  with $\alpha = \beta = \nu, \, \gamma = \nu+1$ and $z = - \mu/\beta$ yields  \eqref{formG}.
 \end{example}
 
 \begin{example}
 The next evaluation is entry $6.232.2$ in \cite{gradshteyn-2015a}:
 \begin{equation}
 G(a,b) = \int_{0}^{\infty} \text{Ei}(- a x) \cos bx \, dx = - \frac{1}{b} \tan^{-1} \left( \frac{b}{a} \right).
 \label{G-form1}
 \end{equation}
 \noindent
 A direct application of the method of brackets using 
 \begin{equation}
\cos x = \pFq01{-}{\tfrac{1}{2}}{- \frac{x^{2}}{4}}
\label{cosine-hyper}
\end{equation}
  gives 
 \begin{equation}
 G(a,b) = \sqrt{\pi} \sum_{n_{1}, n_{2}} \phi_{n_{1},n_{2}} 
 \frac{b^{2n_{1}}a^{n_{2}}}{2^{2n_{1}} \Gamma( n_{1} + \tfrac{1}{2}) \, n_{2}} \langle 2n_{1} + n_{2} + 1 \rangle.
 \end{equation}
 \noindent
 This produces two series for $G(a,b)$: 
 \begin{equation}
 T_{1} = \frac{\sqrt{\pi}}{b} \sum_{n_{2}=0}^{\infty} \frac{(-1)^{n_{2}}}{n_{2}!} \frac{\Gamma(\tfrac{1}{2}(n_{2}+1))}
 {n_{2} \,  \Gamma( - \frac{1}{2}n_{2})} \left( \frac{2a}{b} \right)^{n_{2}},
 \label{form-T1}
 \end{equation}
 \noindent
 and 
 \begin{equation}
 T_{2} = - \frac{\sqrt{\pi}}{a} 
 \sum_{n_{1}=0}^{\infty} \frac{(-1)^{n_{1}}}{n_{1}!}  \frac{\Gamma(2n_{1}+1)}{(2n_{1}+1) \Gamma(n_{1} + \tfrac{1}{2}) }
 \left( \frac{b^{2}}{4a^{2}} \right)^{n_{1}}.
 \end{equation}
 \noindent
 The analysis begins with a simplification of $T_{2}$. Use the duplication formula for the gamma function 
 \begin{equation}
 \frac{\Gamma(2u)}{\Gamma(u)}  = \frac{2^{2u-1}}{\sqrt{\pi}} \Gamma(u + \tfrac{1}{2})
 \end{equation}
 \noindent
 and write 
 \begin{equation}
 \frac{1}{2n_{1}+1 }= \frac{(1)_{n_{1}} \left( \tfrac{1}{2} \right)_{n_{1}}}{n_{1}! \, \left( \tfrac{3}{2} \right)_{n_{1}}}
 \end{equation}
 \noindent
 to obtain 
 \begin{equation}
 T_{2} = - \frac{1}{a} \pFq21{1 \,\,\, \tfrac{1}{2}}{\tfrac{3}{2}}{ - \frac{b^{2}}{a^{2}}},
 \end{equation}
 \noindent
 provided $|b|<|a|$ to guarantee convergence. The form \eqref{G-form1} comes from the identity 
 \begin{equation}
 \pFq21{ \tfrac{1}{2} \,\,\, 1}{\tfrac{3}{2}}{-z^{2}} = \frac{\tan^{-1} z}{z}
 \end{equation}
 (see $9.121.27$ in \cite{gradshteyn-2015a}).  
 
 \smallskip 
 
 The next step is the evaluation of $T_{1}$. Separating the sum \eqref{form-T1} into even and odd indices yields 
 \begin{eqnarray}
 T_{1} & = & \frac{\sqrt{\pi}}{2 b} \sum_{n=0}^{\infty} \frac{1}{(2n)!} \frac{\Gamma \left( n + \tfrac{1}{2} \right) }{n \Gamma(-n)} 
 \left( \frac{4a^{2}}{b^{2}} \right)^{n} \\
 & & - \frac{\sqrt{\pi}}{b} \sum_{n=0}^{\infty} \frac{1}{(2n+1)!} 
 \frac{\Gamma(n+1)}{(2n+1) \Gamma \left( -n - \tfrac{1}{2} \right)} \left( \frac{2a}{b} \right)^{2n+1},
 \nonumber 
 \end{eqnarray}
 \noindent
 and in hypergeometric form
 \begin{eqnarray}
 T_{1} & = &  - \frac{\pi}{2b} \,\, \pFq21{0 \,\,\, \tfrac{1}{2}}{\tfrac{1}{2}}{- \frac{a^{2}}{b^{2}}} + 
 \frac{a}{b^{2}} \,\, \pFq21{\tfrac{1}{2} \,\,\,  1 }{\tfrac{3}{2}}{ - \frac{a^{2}}{b^{2}}} \\
 & = & - \frac{\pi}{2b} + \frac{1}{b} \tan^{-1} \left( \frac{a}{b} \right). \nonumber 
 \end{eqnarray}
 \noindent
 and this is the same as \eqref{G-form1}.
 
 \smallskip
 
 The evaluation of entry $6.232.1$ in \cite{gradshteyn-2015a}
 \begin{equation}
 \int_{0}^{\infty} \text{Ei}(- a x) \sin bx \, dx = - \frac{1}{2b} \ln \left( 1 + \frac{b^{2}}{a^{2}} \right)
 \end{equation}
 \noindent
 is obtained in a similar form.
 \end{example}
 
 \begin{example}
 Entry $6.782.1$  in \cite{gradshteyn-2015a} is
 \begin{equation}
 B(z) =  \int_{0}^{\infty} \text{Ei}(-x) J_{0}(2 \sqrt{zx}) \, dx = \frac{e^{-z}-1}{z}.
 \end{equation}
 \noindent
 Here 
 \begin{equation}
 J_{0}(x) = \pFq01{-}{1}{-\frac{x^{2}}{4}} 
 \end{equation}
 \noindent
 is the classical Bessel function defined in \eqref{bessel-j0a}. Therefore 
 \begin{equation}
 J_{0}(2 \sqrt{z x }) = \sum_{n_{2}} \phi_{n_{2}} \frac{z^{n_{2}}}{\Gamma(n_{2}+1)} x^{n_{2}}.
 \end{equation}
 \noindent
 The standard procedure using the partially divergent series \eqref{divergent-ei} now gives 
 \begin{equation}
 B(z) = \sum_{n_{1},n_{2}} \phi_{n_{1},n_{2}} 
 \frac{1}{n_{1}} \frac{z^{n_{2}}}{\Gamma(n_{2}+1)} \langle n_{1}+n_{2}+1 \rangle,
 \end{equation}
 \noindent
 which gives the convergent series 
  \begin{equation}
 T_{1}  =   - \sum_{n_{1}=0}^{\infty} \frac{(-1)^{n_{1}}}{n_{1}!}  \frac{(1)_{n_{1}}}{(2)_{n_{1}}} z^{n_{1}}  
  =  - \pFq11{1}{2}{-z} 
  =  \frac{e^{-z}-1}{z},
 \end{equation}
 \noindent
 and the series
 \begin{equation}
 T_{2} = - \frac{1}{z} \sum_{n_{2}=0}^{\infty}  \frac{(-z)^{-n_{2}}}{\Gamma(1- n_{2})}.
 \end{equation}
 \noindent
 Observe that the expression $T_{2}$ contains a single non-vanishing term, so it is of 
 the partially null type. An alternative form of $T_{2}$ is 
 to write 
 \begin{eqnarray}
 T_{2} & = & - \frac{1}{z} \sum_{n_{2}=0}^{\infty} \frac{(-z^{-1})^{n_{2}}}{\Gamma(1-n_{2})} \\
 & = & - \frac{1}{z} \sum_{n_{2}=0}^{\infty} \frac{(-z^{-1})^{n_{2}} }{\Gamma(1) \, (1)_{-n_{2}}} \nonumber \\
  & = & -\frac{1}{z} \sum_{n_{2}=0}^{\infty} (z^{-1})^{n_{2}} (0)_{n_{2}} \,\, (1)_{n_{2} }\,\, \frac{(z^{-1})^{n_{2}}}{n_{2}!} 
  \nonumber \\
  & = & - \frac{1}{z} \pFq20{0 \,\,\, 1}{-}{\frac{1}{z}}. \nonumber 
 \end{eqnarray}
 \noindent
 The series $\begin{displaystyle}\pFq20{a \,\, b}{-}{z} \end{displaystyle}$ diverges, unless one of the 
 parameters $a$ or $b$ 
 is a non-positive integer, in which case the series terminates and it reduces to a polynomial. This is precisely what 
 happens here: only the term for $n_{2}=0$ is non-vanishing and $T_{2}$ reduces to 
 \begin{equation}
 T_{2} = - \frac{1}{z}.
 \end{equation}
 \noindent
 This  gives the asymptotic behavior $B(z) \sim - 1/z$, consistent with the value of $T_1$ for large $z$. This phenomena occurs 
 every time one obtains a series of the form ${_{p}F_{q}}(z)$ with $p \geq q+2$ when the series diverges. The truncation represents 
 an asymptotic approximation of the solution.
 \end{example}
 
 \section{The Tricomi function}
\label{sec-tricomi}
\setcounter{equation}{0}

The confluent hypergeometric function, denoted by  $\pFq11{a}{c}{z}$,  defined in \eqref{hyper-def}, arises when
 two of the regular singular points of the differential 
equation for the
Gauss hypergeometric function $\pFq21{a \,\, b}{c}{z}$, given by
\begin{equation}
z(1-z)y''+(c-(a+b+1)z)y'-aby=0,
\end{equation}
are allowed to merge into one singular point. More specifically, if we replace $z$ by $z/b$ in
$\pFq21{a \,\, b }{c}{z}$, then the corresponding differential equation has singular points at $0$, $b$ and
$\infty$. Now let $b\to\infty$ so as to have infinity as a confluence of two singularities. This results in the
function $\pFq11{a}{c}{z}$ so that
\begin{equation}
\pFq11{a}{c}{z} =\lim_{b\to\infty}   \pFq21{a \,\,\, b}{c}{\frac{z}{b}},
\end{equation}
and the corresponding differential equation
\begin{equation}\label{che}
zy''+(c-z)y'-ay=0,
\end{equation}
known as the confluent hypergeometric equation.  Evaluation of integrals connected to this equation 
are provided in \cite{dixit-2015f}. 

 The equation \eqref{che} has two linearly independent solutions:
\begin{equation}
M(a,b;x) = \pFq11{a}{b}{x},
\end{equation}
\noindent
known as the Kummer function and the \textit{Tricomi function} with integral representation 
\begin{equation}
U(a,b;x) = \frac{1}{\Gamma(a)} \int_{0}^{\infty} t^{a-1} \exp(-xt) (1+t)^{b-a-1} \, dt,
\end{equation}
\noindent
and hypergeometric form 
\begin{equation}
\label{hyper-tricomi}
U(a,b;x) = \frac{\Gamma(b-1)}{\Gamma(a)} x^{1-b} 
\pFq11{1+a-b}{2-b}{x} + 
\frac{\Gamma(1-b)}{\Gamma(1+a-b)} 
\pFq11{a}{b}{x}.
\end{equation}

\medskip

A direct application of the method of brackets gives 
\begin{eqnarray*}
U(a,b;x) & = &  \frac{1}{\Gamma(a)} \int_{0}^{\infty} t^{a-1} 
\left( \sum_{n_{1}} \phi_{n_{1}} x^{n_{1}} t^{n_{1}} \right) 
\left( \sum_{n_{2},n_{3}} \phi_{n_{2},n_{3}} t^{n_{3}}
 \frac{ \langle 1+ a - b + n_{2} + n_{3} \rangle }{\Gamma(1+a-b)} \right) \, dt \\
& = & 
\frac{1}{\Gamma(a)} \sum_{n_{1},n_{2},n_{3}} \phi_{n_{1},n_{2},n_{3}} x^{n_{1}} 
\frac{ \langle 1 + a -b +n_{2} + n_{3} \rangle }{\Gamma(1+a-b)} \langle a + n_{1} + n_{3} \rangle.
\end{eqnarray*}

This is a bracket series of index $1$ and its evaluation produces three terms:
\begin{eqnarray*}
U_{1}(a,b;x) & = &  \frac{\Gamma(1-b)}{\Gamma(1+a-b)} \pFq11{a}{b}{x},  \\
U_{2}(a,b;x) & = & \frac{\Gamma(b-1)}{\Gamma(a)} x^{1-b} \pFq11{1+a-b}{2-b}{x}, \\
U_{3}(a,b;x) & = & x^{-a} \,  \pFq20{a \,\,\,\, 1+a-b}{-}{- \frac{1}{x}}. 
\end{eqnarray*}

The first two are convergent in the region $|x|<1$ and their sum yields 
\eqref{hyper-tricomi}.  The series $U_{3}$ is formally divergent, the terms are finite but the series is divergent. 

\begin{example}
The Mellin transform of the Tricomi function is given by 
\begin{equation}
I(a,b;\beta) = \int_{0}^{\infty} x^{\beta-1} U(a,b,x) \, dx.
\label{mellin-tricomi}
\end{equation}
\noindent
Entry $7.612.1$ of \cite{gradshteyn-2015a}
\begin{equation}
\label{hyper-11}
\int_{0}^{\infty} x^{\beta-1} \pFq11{a}{b}{-x} \, dx = 
\frac{\Gamma(\beta) \Gamma(a - \beta) \Gamma(b)}{\Gamma(b-\beta) \Gamma(a)}
\end{equation}
\noindent
is used in the evaluation of $I(a,b,\beta)$. A proof of \eqref{hyper-11} appears in \cite{dixit-2015f}. 

\smallskip

The first evaluation of \eqref{mellin-tricomi} uses the hypergeometric representation \eqref{hyper-tricomi} and 
the formula \eqref{hyper-11}.  This is a traditional computation. Direct substitution gives 
\begin{eqnarray*}
I(a,b,\beta) & = & \frac{\Gamma(b-1)}{\Gamma(a)} 
\int_{0}^{\infty} x^{\beta - b} \pFq11{1+a-b}{2-b}{x} \, dx  +  \\
& & \quad \quad  \frac{\Gamma(1-b)}{\Gamma(1+a-b)} 
\int_{0}^{\infty} x^{\beta-1} \pFq11{a}{b}{x} \, dx \\
& = & -(-1)^{-\beta+b} \frac{\Gamma(b-1)}{\Gamma(a)} 
\frac{\Gamma(\beta-b+1) \Gamma(a- \beta) \Gamma(2-b)}{\Gamma(1+a-b) \Gamma(1- \beta)} \\
& & \quad + (-1)^{-\beta} 
\frac{\Gamma(1-b) }{\Gamma(1+a-b)} \frac{\Gamma(\beta) \Gamma(a - \beta) \Gamma(b)}{\Gamma(b- \beta) 
\Gamma(a)}. 
\end{eqnarray*}
\noindent 
The  result
\begin{equation}
\label{mellin-U}
\int_{0}^{\infty} x^{\beta-1} U(a,b,x) \, dx = 
\frac{\Gamma(a - \beta) \Gamma(\beta - b + 1) \Gamma(\beta)}{\Gamma(a) \Gamma(a-b+1)}
\end{equation}
\noindent
follows from simplification of the previous expression.

\smallskip

The second evaluation of \eqref{mellin-tricomi} uses the method of brackets and 
the divergent series $U_{3}$. It  produces 
the result directly. Start with 
\begin{eqnarray*}
I(a,b,\beta) & = & \int_{0}^{\infty} x^{\beta-1} U(a,b,x) \, dx \\
& = & \int_{0}^{\infty} x^{\beta - a -1} \pFq20{a \,\, \, 1 + a - b }{-}{- \frac{1}{x}} \, dx \\
& = & \sum_{n} \phi_{n} (a)_{n} (1+a-b)_{n} \langle \beta - a - n \rangle.
\end{eqnarray*}
\noindent
A standard evaluation by the method of brackets now reproduces  \eqref{mellin-U}.
\end{example}

\begin{example}
The  evaluation of 
\begin{equation}
J(a,b;\mu) = \int_{0}^{\infty} e^{-  \mu x } U(a,b,x) \, dx
\end{equation}
\noindent
is given next. Start with the expansions 
\begin{equation}
\exp(- \mu x) = \sum_{n_{1}} \phi_{n_{1}} \mu^{n_{1}} x^{n_{1}}
\label{exp-bracket}
\end{equation}
\noindent
and 
\begin{eqnarray*}
U(a,b,x) & = & x^{-a} \,  \pFq20{a \,\,\, 1+a-b}{-}{- \frac{1}{x}} \\
 & = & \frac{x^{-a}}{\Gamma(a) \Gamma(1+a-b)} 
 \sum_{n_{2}} \phi_{n_{2}} \Gamma(a+ n_{2}) \Gamma(1+a-b+n_{2}) x^{-n_{2}}, \nonumber
 \end{eqnarray*}
 \noindent
 to write 
 \begin{equation*}
 J(a,b; \mu) = \frac{1}{\Gamma(a) \Gamma(1+a-b)} 
 \sum_{n_{1},n_{2}} \phi_{n_{1},n_{2}}\mu^{n_{1} }\Gamma(a+ n_{2}) \Gamma(1+a-b+n_{2}) 
 \langle n_{1}-a -n_{2} + 1 \rangle.
 \end{equation*}
 \noindent
 This yields  the two series 
 \begin{eqnarray*}
 J_{1}(a,b;\mu) & = & \frac{1}{\Gamma(a) \Gamma(1+a-b)} 
 \sum_{n} \phi_{n} \Gamma(a-1-n) \Gamma(n+1) \Gamma(2-b+n) \mu^{n} \\
 & = & \frac{\Gamma(2-b)}{(a-1) \Gamma(1+a-b)}
\pFq21{1 \,\,\, 2-b}{2-a}{\mu},  \nonumber
\end{eqnarray*}
\noindent
and 
\begin{eqnarray*}
J_{2}(a,b;\mu) & = & \frac{\mu^{a-1}}{\Gamma(a) \Gamma(1+a-b)} 
\sum_{n} \phi_{n} \Gamma(-a+1-n) \Gamma(a+n) \Gamma(1+a-b+n) \mu^{n} \\
& = & \mu^{a-1} \Gamma(1-a) \pFq10{1+a-b}{-}{\mu} \\
& = & \frac{\mu^{a-1} \, \Gamma(1-a)}{(1-\mu)^{1+a-b}}.
\end{eqnarray*}

\smallskip

In the case $| \mu | < 1$,  both $J_{1}$ and $J_{2}$ are convergent. Therefore 
\begin{equation*}
\int_{0}^{\infty} \exp(- \mu x) U(a,b,x) \, dx = 
 \frac{\Gamma(2-b)}{(a-1) \Gamma(1+a-b)}
\pFq21{1 \,\,\, 2-b}{2-a}{\mu} + 
 \frac{\mu^{a-1} \, \Gamma(1-a)}{(1-\mu)^{1+a-b}}.
 \end{equation*}

 \smallskip 
 
 In the case $\mu=1$, the series $J_{2}$ diverges, so it is discarded. This produces 
 \begin{equation}
 \int_{0}^{\infty} e^{-x} U(a,b,x) \, dx = \frac{\Gamma(2-b)}{(a-1) \Gamma(1+a-b)} 
 \pFq21{1 \,\,\, 2-b}{2-a}{1}.
 \end{equation}
 \noindent
 Gauss'  value \eqref{gauss-value} gives 
 \begin{equation}
 \int_{0}^{\infty} e^{-x} U(a,b,x) \, dx = \frac{\Gamma(2-b)}{\Gamma(2-b+a)}.
 \end{equation}
 \noindent
 In particular, if $a$ is a positive integer, say $a= k$, then 
 \begin{equation}
 \int_{0}^{\infty} e^{-x} U(k,b,x) \, dx = \frac{1}{(b-2)_{k}}.
 \end{equation}
 \noindent
 This result is summarized next.
 
 \begin{proposition}
 Let 
 \begin{equation}
 J(a,b;\mu) = \int_{0}^{\infty} e^{-\mu x} U(a,b,x) \, dx.
 \end{equation}
 \noindent
 Then, for $| \mu |< 1$, 
 \begin{equation}
 J(a,b,\mu) = \frac{\Gamma(2-b)}{(a-1) \Gamma(1+a-b)}
\pFq21{1 \,\,\, 2-b}{2-a}{\mu} + 
 \frac{\mu^{a-1} \, \Gamma(1-a)}{(1-\mu)^{1+a-b}},
 \end{equation}
 \noindent
 and for $\mu=1$,
 \begin{equation}
 J(a,b;1) = \frac{\Gamma(2-b)}{\Gamma(2-b+a)}.
 \end{equation}
 \noindent
 In the special case $a=k \in \mathbb{N}$,
 \begin{equation}
 J(k,b;1) = \frac{1}{(b-2)_{k}}.
 \end{equation}
 \end{proposition}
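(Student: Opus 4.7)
The plan is to apply the method of brackets directly, using the totally divergent series representation $U_{3}$ for the Tricomi function derived earlier in Section \ref{sec-tricomi} rather than the classical hypergeometric form \eqref{hyper-tricomi}. This is the key point of the extension: although $U_{3}$ is a formally divergent series, it generates the correct bracket series that recovers all cases of the proposition in one stroke. First, I would expand $e^{-\mu x}$ as in \eqref{exp-bracket} and write $U(a,b,x)$ using the bracket form of $U_{3}$, then multiply the two series and integrate termwise to produce
\begin{equation*}
J(a,b;\mu)=\frac{1}{\Gamma(a)\Gamma(1+a-b)}\sum_{n_{1},n_{2}}\phi_{n_{1},n_{2}}\mu^{n_{1}}\Gamma(a+n_{2})\Gamma(1+a-b+n_{2})\langle n_{1}-a-n_{2}+1\rangle.
\end{equation*}

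This is a bracket series of index $1$, so Rule $E_{3}$ applies with two possible free indices. Taking $n_{1}$ free solves $n_{2}=n_{1}+1-a$, which after simplifying the resulting gamma factors into Pochhammers (using $\Gamma(a-1-n)=(-1)^{n}\Gamma(a-1)/(2-a)_{n}$ and analogous identities) produces
\begin{equation*}
J_{1}(a,b;\mu)=\frac{\Gamma(2-b)}{(a-1)\Gamma(1+a-b)}\pFq21{1\,\,\,2-b}{2-a}{\mu}.
\end{equation*}
Taking $n_{2}$ free solves $n_{1}=a+n_{2}-1$ and yields
\begin{equation*}
J_{2}(a,b;\mu)=\mu^{a-1}\Gamma(1-a)\pFq10{1+a-b}{-}{\mu}=\frac{\mu^{a-1}\Gamma(1-a)}{(1-\mu)^{1+a-b}},
\end{equation*}
where the last equality is the binomial series.

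For the convergent regime $|\mu|<1$, both $J_{1}$ and $J_{2}$ are legitimate expansions in the same variable $\mu$, so by Rule $E_{3}$ their sum gives $J(a,b;\mu)$, establishing the first formula. For $\mu=1$, $J_{2}$ diverges (the factor $(1-\mu)^{-(1+a-b)}$ blows up for generic parameters), so it is discarded and only $J_{1}$ survives; evaluating $\pFq21{1\,\,\,2-b}{2-a}{1}$ by Gauss's formula \eqref{gauss-value} with $a=1,\,b=2-b_{\text{here}},\,c=2-a_{\text{here}}$ yields $\Gamma(2-a)\Gamma(a-b)/[\Gamma(1-b)\Gamma(1+a-b)]$, which after combining with the prefactor and using $\Gamma(2-a)/(a-1)=-\Gamma(1-a)$ collapses to $\Gamma(2-b)/\Gamma(2-b+a)$. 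Finally, for $a=k\in\mathbb{N}$ the identity $\Gamma(2-b)/\Gamma(2-b+k)=1/(2-b)_{k}\cdot$ and the standard transformation $(2-b)_{k}=(-1)^{k}(b-1-k)_{k}$ — or more directly the functional equation of the gamma function applied $k$ times — yields the stated value $1/(b-2)_{k}$.

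The main obstacle is the bookkeeping in identifying when each of $J_{1},J_{2}$ should be kept or discarded under Rule $E_{3}$ and Rule $E_{4}$: at generic $\mu\in(0,1)$ both series expand about the same origin in $\mu$ and must be \emph{added}, while at $\mu=1$ the divergence of $J_{2}$ is precisely the signal to discard it and read off the value from $J_{1}$ alone. The routine part is the algebraic simplification of gamma quotients into Pochhammer symbols in $J_{1}$ and the recognition that $J_{2}$ is a ${}_{1}F_{0}$ summed by the binomial theorem.
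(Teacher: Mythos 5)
Your proposal follows essentially the same route as the paper: the identical bracket series built from \eqref{exp-bracket} and the divergent ${}_{2}F_{0}$ representation $U_{3}$, the same two resulting series $J_{1}$ and $J_{2}$, the same case split ($|\mu|<1$: add both; $\mu=1$: discard the divergent $J_{2}$), and the same appeal to Gauss's formula \eqref{gauss-value}. One small slip: the value of $\pFq21{1 \,\,\, 2-b}{2-a}{1}$ is $\Gamma(2-a)\Gamma(b-a-1)/\left[\Gamma(1-a)\Gamma(b-a)\right]$, not the expression you wrote; with the corrected value the reduction to $\Gamma(2-b)/\Gamma(2-b+a)$ goes through exactly as you describe.
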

 \end{example}

 \section{The Airy function}
\label{sec-airy}
\setcounter{equation}{0}

The Airy function, defined by the integral representation 
\begin{equation}
\text{Ai}(x) = \frac{1}{\pi} \int_{0}^{\infty} \cos \left( \frac{t^{3}}{3} + x t \right) \, dt
\end{equation}
\noindent
satisfies the equation 
\begin{equation}
\label{airy-ode}
\frac{d^{2}y}{dx^{2}} - x y = 0,
\end{equation}
\noindent
and the condition $y \to 0$ as $x \to \infty$. A second linearly independent solution of \eqref{airy-ode} is usually 
taken to be 
\begin{equation}
\text{Bi}(x) = \frac{1}{\pi} \int_{0}^{\infty} 
\left[ \exp\left( - \frac{t^{3}}{3} + x t \right) + \sin \left(  \frac{t^{3}}{3} + x t \right)  \right]\, dt.
\end{equation}

Using  \eqref{cosine-hyper} produces 
\begin{eqnarray*}
\text{Ai}(x) & = & \frac{1}{\pi} \sum_{n_{1}} \phi_{n} \frac{1}{\left( \tfrac{1}{2} \right)_{n_{1}} \, 2^{2n_{1}}} 
\int_{0}^{\infty} \left( \frac{t^{3}}{3} + x t \right)^{2n_{1}} \, dt \\
& = & \frac{1}{\pi} \sum_{n_{1},n_{2},n_{3}} \phi_{n_{1},n_{2},n_{3}} \frac{x^{n_{2}} \langle -2n_{1}+n_{2} + n_{3} \rangle }
{ \left( \tfrac{1}{2} \right)_{n_{1}} \, 2^{2n_{1}} \, \Gamma(-2n_{1}) 3^{n_{3}}} \int_{0}^{\infty} t^{3 n_{3} + n_{2}} \, dt \nonumber \\
& = & \sum_{n_{1},n_{2},n_{3}} \phi_{n_{1},n_{2},n_{3}} 
\frac{x^{n_{2}}}{\sqrt{\pi} \Gamma(-2n_{1}) \Gamma \left( \tfrac{1}{2} + n_{1} \right)  2^{2n_{1}} 3^{n_{3}} }
\langle -2n_{1} + n_{2} + n_{3} \rangle \, \langle 3 n_{3}  + n_{2} + 1 \rangle. \nonumber 
\end{eqnarray*}

The usual resolution of this bracket series gives three cases: 

\noindent
\begin{equation}
T_{1} = \frac{1}{2} \sqrt{ \frac{3}{\pi}} \sum_{n=0}^{\infty} \frac{(-1)^{n}}{n!} \frac{\Gamma(- \tfrac{1}{2} - 3n) }
{\Gamma(-2n)} \left( \frac{3}{4} \right)^{n} x^{3n+ 1/2}
\end{equation}
\noindent
a totally null series,
\begin{equation}
T_{2} = \frac{1}{6^{2/3} \, \sqrt{\pi}} \sum_{n=0}^{\infty} \frac{(-1)^{n}}{n!} 
\frac{\Gamma( \tfrac{1}{6} - \tfrac{n}{3} ) }{\Gamma( \tfrac{1}{3} - \tfrac{2n}{3} )} 
\left( \frac{3}{4} \right)^{n/3} x^{n}
\end{equation}
\noindent
a partially divergent series  (at the index $n = 18$), and 

\smallskip

\begin{equation}
T_{3}  = \frac{1}{\sqrt{\pi}} \sum_{n=0}^{\infty} \frac{(-1)^{n}}{n!} 
\frac{\Gamma(3n+1) \Gamma(n+ \tfrac{1}{2})}{\Gamma(-n) \Gamma(2n+1)} 
\left( \frac{4}{3} \right)^{n} x^{-3n-1}
\end{equation}
\noindent
a totally null series, as $T_{1}$ was. 

\begin{example}
The series for $\text{Ai}(x)$ are now used to evaluate the Mellin transform 
\begin{equation}
I(s) = \int_{0}^{\infty} x^{s -1 } \text{Ai}(x) \, dx.
\end{equation}
\noindent
This integral is now computed using the three series $T_{j}$ given above. Using first the value of $T_{1}$
 and the formulas
\begin{equation}
\Gamma(2u) = \frac{2^{2u-1}}{\sqrt{\pi}} \Gamma(u) \Gamma(u + \tfrac{1}{2} ) \text{ and }
\Gamma(3u) = \frac{3^{3u-\tfrac{1}{2}}}{2 \pi} 
\Gamma(u) \Gamma(u + \tfrac{1}{3}) \Gamma( u + \tfrac{2}{3})
\end{equation}
\noindent
(these appear as $8.335.1$ and $8.335.2$ in \cite{gradshteyn-2015a}, respectively), give
\begin{eqnarray}
I(s) & = & \frac{1}{2} \sqrt{ \frac{3}{\pi}} \sum_{n} \phi_{n} 
\left( \frac{3}{4} \right)^{n} \frac{\Gamma( - \tfrac{1}{2} - 3n)}{\Gamma(-2n)} \langle s+ 3 n + \tfrac{1}{2} \rangle \\
& = & \frac{1}{6} \sqrt{\frac{3}{\pi}} \left( \frac{3}{4} \right)^{-s/3 - 1/6} 
\frac{ \Gamma \left( \tfrac{2s  + 1}{6}  \right) \Gamma(s) }{\Gamma \left( \frac{2 s+1}{3}
 \right) }. \nonumber  \\
& = &  3^{-(s+2)/3} \frac{\Gamma(s)}{\Gamma( \frac{s+2}{3})} \nonumber \\
& = &  \frac{3^{(4 s - 7)/6} }{2 \pi} \Gamma \left( \frac{s +1}{3} \right) 
\Gamma\left( \frac{s}{3} \right). \nonumber
\end{eqnarray}
\noindent
Similar calculations, using $T_{2}$ or $T_{3}$, give the same result.  This result is stated next.

\begin{lemma}
The Mellin transform of the Airy function is given by 
\begin{equation}
 \int_{0}^{\infty} x^{s -1 } {\rm{Ai}}(x) \, dx =
  \frac{1}{2 \pi} 3^{(4 s - 7)/6}\Gamma \left( \frac{s +1}{3} \right) 
\Gamma\left( \frac{s}{3} \right). 
\end{equation}
\end{lemma}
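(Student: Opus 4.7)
The plan is to apply the method of brackets directly to $I(s) = \int_{0}^{\infty} x^{s-1}\text{Ai}(x)\,dx$ using one of the three non-classical series representations $T_{1}$, $T_{2}$, $T_{3}$ for the Airy function derived above, and then collapse the resulting gamma factors with classical product formulas. Since each representation is a single series in $x^{3n+1/2}$, $x^{n}$, or $x^{-3n-1}$ respectively, multiplying by $x^{s-1}$ and invoking Rule $P_{1}$ produces a one-dimensional bracket series of the form $\sum_{n} \phi_{n}\, c_{n}\, \langle s+\alpha n + \beta\rangle$, to which Rule $E_{1}$ applies directly.

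Taking $T_{1}$ as the working choice, the vanishing of the bracket forces $n^{\ast} = -(2s+1)/6$, and Rule $E_{1}$ assigns
\begin{equation}
I(s) = \tfrac{1}{6}\sqrt{3/\pi}\left(\tfrac{3}{4}\right)^{-(2s+1)/6} \frac{\Gamma(s)\,\Gamma((2s+1)/6)}{\Gamma((2s+1)/3)}.
\end{equation}
The remaining work is purely algebraic: one must rewrite this in terms of $\Gamma((s+1)/3)\,\Gamma(s/3)$. The key tool is the triplication formula $\Gamma(3u) = (3^{3u-1/2}/2\pi)\,\Gamma(u)\,\Gamma(u+\tfrac{1}{3})\,\Gamma(u+\tfrac{2}{3})$ applied with $u = s/3$, together with the duplication formula $\Gamma(2v) = 2^{2v-1}\pi^{-1/2}\,\Gamma(v)\,\Gamma(v+\tfrac{1}{2})$ applied with $v = (2s+1)/6$; these rewrite $\Gamma(s)$, $\Gamma((2s+1)/6)$ and $\Gamma((2s+1)/3)$ in a common currency, so that the ratio collapses to $\Gamma((s+1)/3)\,\Gamma(s/3)$ times a pure power of $3$. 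Carefully tracking how the exponents of $3$ in the prefactor $(3/4)^{-(2s+1)/6}$, the triplication factor $3^{s-1/2}$, and the duplication factor combine to give $3^{(4s-7)/6}$ is the main obstacle and the principal source of potential error.

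Finally, to confirm that the answer does not depend on the choice of non-classical representation, I would repeat the procedure with $T_{2}$ and $T_{3}$. Each produces its own one-dimensional bracket series; the partially divergent term in $T_{2}$ sits at an isolated index that does not coincide with the relevant $n^{\ast}$ for generic $s$, and the totally null series $T_{3}$ is absorbed by Rule $E_{1}$ in the same manner as $T_{1}$. The same triplication and duplication identities then reduce all three outputs to the stated closed form, confirming the lemma and illustrating the internal consistency of the bracket method across totally null, partially divergent, and formal representations.
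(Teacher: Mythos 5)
Your proposal matches the paper's own proof essentially step for step: the paper also forms the one-dimensional bracket series from the totally null representation $T_{1}$, applies Rule $E_{1}$ at $n^{\ast}=-(2s+1)/6$ to obtain exactly the intermediate expression $\tfrac{1}{6}\sqrt{3/\pi}\,(3/4)^{-(2s+1)/6}\,\Gamma(s)\Gamma(\tfrac{2s+1}{6})/\Gamma(\tfrac{2s+1}{3})$, and then collapses it with the duplication formula at $v=(2s+1)/6$ and the triplication formula at $u=s/3$ to reach $3^{(4s-7)/6}\Gamma(\tfrac{s+1}{3})\Gamma(\tfrac{s}{3})/2\pi$. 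The concluding cross-check against $T_{2}$ and $T_{3}$ is likewise what the paper records ("similar calculations give the same result"), so your route is the same as the paper's.
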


\end{example}

\section{The Bessel function $K_{\nu}$}
\label{sec-bessel-nu}
\setcounter{equation}{0}

This section presents series representations for the Bessel function $K_{\nu}(x)$ defined by the integral 
representation
\begin{equation}
K_{\nu}(x) = \frac{2^{\nu} \Gamma(\nu+ \tfrac{1}{2})}{\Gamma(\tfrac{1}{2})} x^{\nu} \int_{0}^{\infty} \frac{\cos t \, dt }
{(x^{2}+t^{2})^{\nu+ \tfrac{1}{2}}},
\end{equation}
\noindent
given as entry $8.432.5$ in \cite{gradshteyn-2015a}. Using the representation \eqref{cosine-hyper} of $\cos t$ as 
$\begin{displaystyle} \pFq01{-}{\tfrac{1}{2}}{- \frac{t^{2}}{4}} \end{displaystyle}$ and using Rule $P_{2}$ in Section 
\ref{sec-method} to expand the 
binomial in the integrand as a bracket series gives 
\begin{equation}
K_{\nu}(x) = 2^{\nu} \sum_{n_{1},n_{2},n_{3}} \phi_{n_{1},n_{2},n_{3}} \frac{x^{2n_{3}+\nu}}{2^{2n_{1}}
\Gamma(n_{1} + \tfrac{1}{2}) }
\langle \nu+ \tfrac{1}{2} + n_{2} + n_{3} \rangle \langle 2 n_{1} + 2n_{2} + 1 \rangle.
\end{equation}
\noindent
The usual procedure to evaluate this bracket series gives three expressions:

\begin{eqnarray}
T_{1} & = & 2^{\nu-1} x^{-\nu} \sum_{n} \phi_{n} \Gamma(\nu - n) \left( \frac{x^{2}}{4} \right)^{n}, \\
T_{2} & = & 2^{-1-\nu} x^{\nu} \sum_{n} \phi_{n} \Gamma(-\nu -n) \left( \frac{x^{2}}{4} \right)^{n}, \nonumber \\
T_{3} & = & 2^{\nu} \sum_{n} \phi_{n} \frac{2^{2n}}{\Gamma(-n)} \Gamma(n+ \nu + \tfrac{1}{2}) 
\Gamma(n + \tfrac{1}{2}) x^{-2n-\nu-1}. \nonumber 
\end{eqnarray}

The series $T_{3}$ is a totally null series for $K_{\nu}$.  In the case $\nu \not \in \mathbb{N}$, the series 
$T_{1}$ and $T_{2}$ are finite  and $K_{\nu}(x) = T_{1}+T_{2}$ gives the usual expression in terms of the 
Bessel $I_{\nu}$ function 
\begin{equation}
K_{\nu}(x) = \frac{\pi}{2} \frac{I_{- \nu}(x) - I_{\nu}(x)}{\sin \pi \nu},
\end{equation}
\noindent
as given in entry $8.485$ in \cite{gradshteyn-2015a}. 

In the case $\nu = k \in \mathbb{N}$, the series $T_{1}$ is partially divergent (the terms $n=0, \, 1, \ldots, k$ 
have divergent coefficients) and the series $T_{2}$ is totally divergent (every coefficient is divergent). In the case 
$\nu=0$, both the series $T_{1}$ and $T_{2}$ become
\begin{equation}
\label{divergent-k0}
\text{Totally divergent series for } K_{0}(x) = \frac{1}{2} \sum_{n} \phi_{n} \Gamma(- n) \left( \frac{x^{2}}{4} \right)^{n},
\end{equation}
\noindent 
using Rule $E_{4}$ to keep a  single copy of the divergent series. This complements the
\begin{equation}
\label{null-k0} 
\text{Totally null series for } K_{0}(x) 
=  \sum_{n} \phi_{n} \frac{2^{2n}}{\Gamma(-n)} \Gamma^{2}(n+ \tfrac{1}{2})  x^{-2n-1}. 
\end{equation}

\medskip

The examples presented below illustrate the use of these divergent series in the computation of definite 
integrals with the Bessel function $K_{0}$ in the  integrand. Entries in \cite{gradshteyn-2015a} 
 with $K_{0}$ as the result of an integral have been discussed in \cite{glasser-2012a}. 

\begin{example}
\label{ex-k0-1}
Entry $6.511.12$ of \cite{gradshteyn-2015a} states that 
\begin{equation}
\int_{0}^{\infty} K_{0}(x) \, dx = \frac{\pi}{2}.
\label{value-k0-1}
\end{equation}
\noindent
To verify this result, use the totally null representation \eqref{null-k0} to obtain 
\begin{eqnarray}
\int_{0}^{\infty} K_{0}(x) \, dx & = & \sum_{n} \phi_{n} \frac{\Gamma \left( n + \tfrac{1}{2} \right)^{2}}{\Gamma(-n)} 
4^{n} \int_{0}^{\infty} x^{-2n-1} \, dx \\
& = & \sum_{n} \phi_{n} \frac{\Gamma \left( n + \tfrac{1}{2} \right)^{2}}{\Gamma(-n)} 
4^{n} \langle -2n \rangle. \nonumber
\end{eqnarray}
\noindent
The value of the bracket series is 
\begin{eqnarray}
\int_{0}^{\infty} K_{0}(x) \, dx & = &  \frac{1}{2} \Gamma \left(n + \tfrac{1}{2} \right)^{2} 4^{n} \Big{|}_{n=0}  \\ 
 & = & \frac{\pi}{2}.  \nonumber 
 \end{eqnarray}
\end{example}

\begin{example}
The Mellin transform 
\begin{equation}
G(\beta,s) = \int_{0}^{\infty} x^{s-1} K_{0}(\beta x) \, dx 
\end{equation}
\noindent
is evaluated next.  Example \ref{ex-k0-1} corresponds to the special case $s=\beta = 1$. The totally divergent 
series \eqref{divergent-k0}  yields
\begin{equation}
G(\beta,s) = \frac{1}{2} \sum_{n} \phi_{n} \Gamma(-n) \frac{\beta^{2n}}{2^{2n}} \langle 2n + s \rangle
\end{equation}
\noindent
and a direct evaluation of the brackets series using Rule $E_{1}$ gives 
\begin{equation}
G(\beta, s) = \frac{2^{s-2}}{\beta^{s}} \Gamma^{2} \left( \frac{s}{2} \right).
\label{mellin-k0}
\end{equation}

Now using the totally null representation \eqref{null-k0} gives the bracket series 
\begin{equation}
G(\beta, s) = \sum_{n} \phi_{n} \frac{2^{2n} \Gamma^{2}(n + \tfrac{1}{2})}{\beta^{2n+1} \, \Gamma(-n)} 
\langle s -1-2n \rangle.
\end{equation}
\noindent
One more application of Rule $E_{1}$ gives \eqref{mellin-k0} again.
\end{example}

\begin{example}
Entry $6.611.9$  of \cite{gradshteyn-2015a} is 
\begin{equation}
\label{formula-6-611-9}
\int_{0}^{\infty} e^{-ax} K_{0}(bx) \, dx  = \frac{1}{\sqrt{b^{2}-a^{2}}} \cos^{-1}\left(  \frac{a}{b} \right),
\end{equation}
\noindent
for $\realpart{(a+b)} > 0$.  This is a generalization of Example \ref{ex-k0-1}. The  totally divergent representation \eqref{divergent-k0} 
and the series for the  exponential function \eqref{exp-bracket} give the bracket series 
\begin{equation}
\int_{0}^{\infty} e^{-ax} K_{0}(bx) \, dx  =  
\frac{1}{2} \sum_{n_{1},n_{2}}  \phi_{n_{1}n_{2}}  \Gamma(-n_{2}) \frac{a^{n_{1}}b^{2n_{2}}}{2^{2n_{2}}}
\langle n_{1} + 2n_{2} + 1 \rangle.
\end{equation}
The usual procedure gives two expressions: 

\noindent
\begin{equation}
T_{1} = \frac{1}{2a} \sum_{n} \phi_{n} \Gamma(2n+1) \Gamma(-n) \left( \frac{b^{2}}{4a^{2}} \right)^{n},
\end{equation}
\noindent
which  is discarded since it is divergent and 
\begin{equation}
T_{2} = \frac{1}{2b} \sum_{n=0}^{\infty} \frac{\Gamma \left( \frac{n+1}{2} \right)^{2}}{n!} 
\left( - \frac{2a}{b} \right)^{n}.
\end{equation}
\noindent
Separating the series according to the parity of the index $n$ yields 
\begin{equation}
T_{2} = \frac{1}{2b} \left[ \pi \sum_{n=0}^{\infty} 
 \frac{ \left( \tfrac{1}{2} \right)_{n}}{n! }\left(  \frac{a^{2}}{b^{2}} \right)^{n} - 
\frac{2a}{b} \sum_{n=0}^{\infty} \frac{(1)_{n}^{2}}{n! \, \left( \frac{3}{2} \right)_{n}} \left( \frac{a^{2}}{b^{2}} \right)^{n} 
\right].
\end{equation}
\noindent
The identity  \cite[$9.121.1$]{gradshteyn-2015a} 
\begin{equation}
\pFq21{-n,b}{b}{-z} = (1+z)^{n},
\end{equation}
\noindent
with $n = -\tfrac{1}{2}$ gives 
\begin{equation}
\frac{\pi}{2b} \sum_{n=0}^{\infty} \frac{ \left( \tfrac{1}{2} \right)_{n}}{n!} \left(  \frac{a^{2}}{b^{2}} \right)^{n} = 
\frac{\pi}{2} \frac{1}{\sqrt{b^{2}-a^{2}}}.
\end{equation}
\noindent
The identity 
\begin{equation}
- \frac{a}{b^{2}} \sum_{n=0}^{\infty} \frac{(1)_{n}^{2}}{n! \, \left( \tfrac{3}{2} \right)_{n}} \left( \frac{a}{b} \right)^{2n} 
= - \frac{1}{\sqrt{b^{2}-a^{2}}} \sin^{-1} \left( \frac{a}{b} \right)
\end{equation}
\noindent
comes from the  Taylor series 
\begin{equation}
\frac{2x \sin^{-1}x}{\sqrt{1-x^{2}}} = \sum_{n=1}^{\infty} \frac{2^{2n}x^{2n}}{n \, \binom{2n}{n}}.
\end{equation}
(See Theorem $7.6.2$ in \cite{moll-2012a} for a proof). The usual argument now gives 
\begin{equation}
T_{2} = \int_{0}^{\infty} e^{-ax} K_{0}(bx) \, dx = \frac{1}{\sqrt{b^{2}-a^{2}}} \left[ \frac{\pi}{2} - 
\sin^{-1}\left( \frac{a}{b} \right) \right],
\end{equation}
\noindent
an equivalent form of \eqref{formula-6-611-9}.
\end{example}

\begin{example}
The next  example,
\begin{equation}
\int_{0}^{\infty} x \sin(bx) K_{0}(ax) \, dx = \frac{\pi b}{2} (a^{2}+b^{2})^{-3/2},
\end{equation}
\noindent
appears as  entry $6.691$ in \cite{gradshteyn-2015a}.  The factor $\sin bx$ in integrand is expressed as a series:
\begin{eqnarray}
\sin(bx) & =& b \, x \, 
 \pFq01{-}{\tfrac{3}{2}}{- \frac{b^{2}x^{2}}{4}} \label{sin-1} \\
 & = & b \Gamma \left( \tfrac{3}{2} \right) \sum_{n_{2}} \phi_{n_{2}} 
 \frac{ \left( \frac{b^{2}}{4} \right)^{n_{2}}}{\Gamma \left( n_{2}  + \tfrac{1}{2} \right)} x^{2n_{2}+1} \nonumber 
\end{eqnarray}
\noindent
and  the Bessel factor is replaced by its totally-null representation \eqref{null-k0}
\begin{equation}
K_{0}(ax) = \frac{1}{a} \sum_{n_{1}} \phi_{n_{1}} \frac{\Gamma \left( n_{1} + \tfrac{1}{2} \right)^{2}}{\Gamma(-n_{1})}
\left( \frac{4}{a^{2}} \right)^{n_{1}} x^{-2n_{1}-1}.
\end{equation}
\noindent
This yields 
\begin{multline}
\int_{0}^{\infty} x \sin(bx) K_{0}(ax) \, dx = \\
\Gamma \left( \frac{3}{2} \right) \sum_{n_{1},n_{2}} \phi_{n_{1},n_{2}} 
\frac{\Gamma \left( n_{1} + \tfrac{1}{2} \right)^{2}}{\Gamma \left( n_{2} + \tfrac{3}{2} \right) \Gamma(-n_{1})} 
\frac{4^{n_{1}-n_{2}} b^{2n_{2}+1}}{a^{2n_{1}+1}} \langle 2+ 2n_{2}-2n_{1} \rangle.
\end{multline}
\noindent
These representation produces two solutions $S_{1}$ and $S_{2}$, one per free index, that \textit{are identical}.  The 
method of brackets rules state that one only should be taken. This is:
\begin{equation}
S_{1}  = \frac{\sqrt{\pi} \, b}{a^{3}} 
\sum_{k=0}^{\infty} \frac{ \Gamma \left( k+ \tfrac{3}{2} \right) (-1)^k b^{2k}  }{k! \, a^{2k}}.
\end{equation}
\noindent
The result now follows from the identity 
\begin{equation}
\sum_{k=0}^{\infty} \frac{ \left( \tfrac{3}{2} \right)_{k}}{k!} \left( - \frac{b}{a} \right)^{k} = 
\pFq10{\tfrac{3}{2}}{-}{- \frac{b}{a}}
\end{equation}
\noindent
and the binomial theorem obtaining
\begin{equation}
\pFq10{\tfrac{3}{2}}{-}{x} = \frac{1}{(1-x)^{3/2}}.
\end{equation}
\end{example}

\begin{example}
The next example in this section evaluates 
\begin{equation}
G(a,b) = \int_{0}^{\infty} J_{0}(ax) K_{0}(bx) \, dx.
\end{equation}
\noindent
From the representation 
\begin{equation}
J_{0}(ax) = \sum_{n_{1}} \phi_{n_{1}} \frac{a^{2n_{1}} x^{2n_{1}}}{2^{2n_{1}} \Gamma(n_{1}+1)}
\end{equation}
\noindent
and the null-series \eqref{k0-null} it follows that 
\begin{equation}
G(a,b) = \sum_{n_{1},n_{2}} \phi_{n_{1},n_{2}} 
\frac{a^{2n_{1}} 2^{2(n_{2}-n_{1})} \Gamma^{2}(n_{2}+ \tfrac{1}{2})}{\Gamma(n_{1}+1) \Gamma(-n_{2}) b^{2n_{2}+1}}
\langle 2n_{1} - 2n_{2} \rangle.
\end{equation}
\noindent
This bracket series generates two identical series, so only one  is kept  to produce 
\begin{eqnarray}
G(a,b) & = & \frac{1}{2b} \sum_{n} \phi_{n} \frac{\Gamma^{2}(n+ \tfrac{1}{2})}{\Gamma(n+1)} 
\left( \frac{a^{2}}{b^{2}} \right)^{n} \\
& = & \frac{\pi}{2b} \pFq21{\frac{1}{2} \,\,\, \frac{1}{2}}{1}{ - \frac{a^{2}}{b^{2}} } \nonumber \\
& = & \frac{1}{b} \mathbf{K} \left( \frac{i a}{b} \right). \nonumber
\end{eqnarray}
Here $\mathbf{K}(z)$ is the elliptic integral of the first kind. Using the identity 
\begin{equation}
\mathbf{K}(i z) = \frac{1}{\sqrt{z^{2}+1}} \mathbf{K} \left( \frac{z}{\sqrt{z^{2}+1}} \right)
\end{equation}
\noindent
yields 
\begin{equation}
G(a,b) = \frac{1}{\sqrt{a^{2}+b^{2}}} \mathbf{K} \left( \frac{a}{\sqrt{a^{2}+b^{2}}} \right).
\end{equation}
\end{example}

\begin{example}
The next example evaluates 
\begin{equation}
H(a) = \int_{0}^{\infty} K_{0}^{2}(ax) \, dx.
\end{equation}
\noindent
Naturally $H(a) = H(1)/a$, but it is convenient to keep $a$ as a parameter.  The problem is generalized to 
\begin{equation}
H_{1}(a,b) = \int_{0}^{\infty} K_{0}(ax)K_{0}(bx) \, dx,
\end{equation}
\noindent
and $H(a) = H_{1}(a,a)$. The evaluation uses the totally divergent  series \eqref{divergent-k0}
\begin{equation}
K_{0}(ax) =  \sum_{n_{1}} \phi_{n_{1}} \frac{a^{2n_{1}} \Gamma(-n_{1})}{2^{2n_{1}+1}} x^{2n_{1}}
\end{equation}
\noindent 
as well as the integral representation (see $8.432.6$ \cite{gradshteyn-2015a}) and the corresponding bracket series
\begin{eqnarray}
K_{0}(bx) & = &  \frac{1}{2} \int_{0}^{\infty} \exp \left( -t - \frac{b^{2}x^{2}}{4t} \right) \, \frac{dt}{t} \\
 & = &  \sum_{n_{2},n_{3}} \phi_{n_{2},n_{3}} 
 \frac{b^{2n_{3}} x^{2n_{3}}}{2^{2n_{3}+1}} \langle n_{2} - n_{3} \rangle.  \nonumber 
 \end{eqnarray}
 \noindent
 Then
 \begin{equation}
 H_{1}(a,b) = \sum_{n_{1},n_{2},n_{3}} \phi_{n_{1},n_{2},n_{3}} 
 \frac{a^{2n_{1}} b^{2n_{3}} \Gamma(-n_{1})}{2^{2n_{1}+2n_{3}+2}} 
 \langle n_{2} - n_{3} \rangle \, \langle 2n_{1} + 2n_{3} + 1 \rangle.
 \end{equation}
 \noindent
 The evaluation of this bracket series requires an extra parameter $\varepsilon$ and to consider 
 \begin{equation}
 H_{2}(a,b,\varepsilon) = \sum_{n_{1},n_{2},n_{3}} \phi_{n_{1},n_{2},n_{3}} 
 \frac{a^{2n_{1}} b^{2n_{3}} \Gamma(-n_{1})}{2^{2n_{1}+2n_{3}+2}} 
 \langle n_{2} - n_{3} + \varepsilon \rangle \, \langle 2n_{1} + 2n_{3} + 1 \rangle.
 \end{equation}
 \noindent
 Evaluating this brackets series produces three values, one divergent, which is discarded, and two others:
 \begin{eqnarray}
 T_{2} & = &  \frac{1}{4a} c^{\varepsilon} \sum_{n} \phi_{n} \Gamma(-n- \varepsilon) 
 \Gamma^{2}(\varepsilon + n + \tfrac{1}{2})c^{n} \\
 T_{3} & = & \frac{1}{4a} \sum_{n} \phi_{n} \Gamma(-n + \varepsilon) \Gamma^{2}(n + \tfrac{1}{2}) c^{n},
 \nonumber 
 \end{eqnarray}
 \noindent
 with $c = b^{2}/a^{2}$. Converting the $\Gamma$-factors into Pochhammer symbols  produces 
 \begin{eqnarray}
 T_{2} & = &  \frac{1}{4a} c^{\varepsilon} \Gamma(- \varepsilon) \Gamma^{2} \left( \tfrac{1}{2} + \varepsilon \right) 
 \pFq21{\tfrac{1}{2} + \varepsilon \,\,\, \tfrac{1}{2} + \varepsilon }{1+ \varepsilon}{\,\,\, c} \\
 T_{3} & = & \frac{\pi}{4a} \Gamma(\varepsilon)  \,\, 
  \pFq21{\tfrac{1}{2}   \,\,\, \tfrac{1}{2}   }{1 -\varepsilon}{\,\,\, c}.
 \nonumber 
 \end{eqnarray}
 \noindent
 This yields 
 \begin{equation*}
 H_{2}(a,b, \varepsilon) = \frac{\pi}{4a} 
 \left[ \Gamma(\varepsilon)  \pFq21{\tfrac{1}{2}  \,\,\, \tfrac{1}{2}  }{1 - \varepsilon}{\,\,\, c}  - 
 c^{\varepsilon} 
 \frac{\Gamma^{2}( \tfrac{1}{2} + \varepsilon)}{\varepsilon \,\Gamma(\varepsilon) \sin \pi \varepsilon}
  \pFq21{\tfrac{1}{2} + \varepsilon \,\,\, \tfrac{1}{2} + \varepsilon }{1+ \varepsilon}{\,\,\, c} \right].
  \end{equation*}
  Let $c \to 1 \,\,  ( b \to a)$ and use Gauss'  formula \eqref{gauss-value} to obtain 
  \begin{equation*}
  \pFq21{\frac{1}{2} \,\,\, \frac{1}{2} }{1- \varepsilon}{1} = \frac{\Gamma(1 - \varepsilon) \Gamma(- \varepsilon)}
  {\Gamma^{2} \left( \frac{1}{2} - \varepsilon \right)}
 \,\,\, {\rm  and } \,\,\,
  \pFq21{\frac{1}{2} + \varepsilon  \,\,\, \frac{1}{2}  + \varepsilon }{1+ \varepsilon}{1} =
   \frac{\Gamma(1 + \varepsilon) \Gamma(- \varepsilon)} {\Gamma^{2} \left( \frac{1}{2}  \right)},
  \end{equation*}
  \noindent
  and this produces 
  \begin{eqnarray*}
  H_{2}(a,a,\varepsilon) & = & \frac{\Gamma(- \varepsilon)^{2} \Gamma^{2} \left( \varepsilon + \tfrac{1}{2} \right) 
  \Gamma( \varepsilon + 1)}{4 \pi a} +  \frac{\pi \Gamma(1 - \varepsilon) \Gamma(- \varepsilon) 
  \Gamma(\varepsilon)}{4 a \, \Gamma^{2} \left( \tfrac{1}{2} - \varepsilon \right)}  \\
  & = & \frac{\pi}{4a} 
  \left[ \frac{\Gamma^{2}(- \varepsilon) \Gamma(\varepsilon+1) \Gamma^{2}(\varepsilon + \tfrac{1}{2})}{\pi^{2}} 
  + \frac{\Gamma(1 - \varepsilon) \Gamma(- \varepsilon) \Gamma(\varepsilon)}{\Gamma^{2}( \tfrac{1}{2} 
  - \varepsilon)} \right]. \nonumber
  \end{eqnarray*}
  \noindent
 Expanding $H_{2}(a,a,\varepsilon)$ in powers of $\varepsilon$ gives 
 \begin{equation}
 H(a,a,\varepsilon) = \frac{\pi^{2}}{4a} - \frac{\pi^{2}}{4a} ( \gamma + 4 \ln 2 ) \varepsilon + 
 o(\varepsilon).
 \end{equation}
 \noindent
 Letting $\varepsilon \to 0$ gives
 \begin{equation}
 \int_{0}^{\infty} K_{0}^{2}(ax) \, dx = \frac{\pi^{2}}{4a}.
 \label{k0-squared}
 \end{equation}
\end{example}

\begin{example}
The final example in this section is the general integral 
\begin{equation}
I(a,b;\nu,\lambda;\rho)  = \int_{0}^{\infty} x^{\rho-1} K_{\nu}(ax) K_{\lambda}(bx) \, dx.
\label{int-kgen}
\end{equation}
\noindent
The case $a=b$ appears in \cite{kolbig-1995a}. 

The evaluation uses the integral representation 
\begin{equation}
K_{\nu}(ax) = \frac{(ax)^{\nu}}{2^{\nu+1}} \int_{0}^{\infty} \text{exp}\left( - t - \frac{a^{2}x^{2}}{4t} \right) \frac{dt}{t^{\nu+1}}
\end{equation}
\noindent
appearing in \cite[$8.432.6$]{gradshteyn-2015a}. This produces the bracket series representation 
\begin{equation}
K_{\nu}(ax) = \frac{1}{2^{\nu+1}} \sum_{n_{1},n_{2}} \phi_{n_{1},n_{2}} 
\frac{a^{2n_{2}+\nu}}{2^{2n_{2}}} x^{2n_{2}+\nu} \langle n_{1}-n_{2} - \nu \rangle.
\end{equation}
\noindent
The second factor uses the totally null representation \eqref{k0-null}
\begin{equation}
K_{\lambda}(bx) = 2^{\lambda} \sum_{n_{3}} \phi_{n_{3}} 
\frac{2^{2n_{3}} \Gamma(n_{3} + \lambda + \tfrac{1}{2} ) \Gamma(n_{3} + \tfrac{1}{2}) }{\Gamma(-n_{3}) 
b^{2n_{3} + \lambda + 1} } \frac{1}{x^{2n_{3}+\lambda + 1}}.
\end{equation}
\noindent
Replacing in \eqref{int-kgen} produces the bracket series 
\begin{multline}
I(a,b;\nu,\lambda;\rho) = \sum_{n_{1},n_{2},n_{3}} \phi_{n_{1},n_{2},n_{3}}
\frac{a^{2n_{2}+\nu} 2^{\lambda - \nu -1 + 2n_{3} - 2n_{2}} \Gamma(n_{3}+ \lambda + \tfrac{1}{2}) 
\Gamma(n_{3} + \tfrac{1}{2} )}{b^{2n_{3} + \lambda +1} \, \Gamma(-n_{3})} \label{nice-brackets} \\  
\times \langle n_{1}-n_{2} - \nu \rangle \langle \rho + \nu - \lambda + 2n_{2} - 2n_{3} - 1 \rangle.
\end{multline}
\noindent 
The vanishing of the brackets gives the system of equations
\begin{eqnarray}
n_{1} - n_{2} & = & \nu \\
2n_{2} - 2n_{3} & = & -\rho - \nu + \lambda +1. \nonumber 
\end{eqnarray}
\noindent
The matrix of coefficients is of rank $2$, so it produces  three series as candidates for values of the integral, one 
per free index. 

\smallskip

\noindent
\textit{Case 1}: $n_{1}$ free. Then $n_{2} = n_{1} - \nu$ and $n_{3} = \frac{\rho - \nu - \lambda-1}{2} + n_{1}$. This 
gives 
\begin{equation*}
T_{1} = 2^{\rho-3} \frac{b^{\nu- \rho} }{a^{\nu}}
\Gamma \left( \frac{\rho - \nu + \lambda}{2} \right) 
\Gamma \left( \frac{\rho - \nu - \lambda}{2} \right)
\Gamma(\nu) 
\, \pFq21{ \frac{\rho-\nu + \lambda}{2} \,\, \frac{\rho - \nu - \lambda}{2}}{1 - \nu}{ \frac{a^{2}}{b^{2}}}.
\end{equation*}

\smallskip 

\noindent
\textit{Case 2}: $n_{2}$ free. Then $n_{1} = n_{2} + \nu$ and $n_{3} = \frac{\rho+\nu - \lambda - 1}{2} 
+ n_{2}$. This gives 
\begin{equation*}
T_{2} = 2^{\rho-3} \frac{a^{\nu}}{b^{\nu+\rho}} \Gamma(- \nu) 
\Gamma \left( \frac{\rho +\nu + \lambda}{2} \right) 
\Gamma \left( \frac{\rho + \nu - \lambda}{2} \right)
\Gamma(\nu) 
\, \pFq21{ \frac{\rho + \nu + \lambda}{2} \,\, \frac{\rho + \nu - \lambda}{2}}{1 + \nu}{ \frac{a^{2}}{b^{2}}}.
\end{equation*}

\smallskip 

\noindent
\textit{Case 3}: $n_{3}$ free. Then $n_{2} = n_{3} + \frac{\lambda - \rho - \nu +1}{2}$ and 
$n_{1} = n_{3} + \frac{\lambda - \rho + \nu +1}{2}$. This produces 
\begin{multline*}
T_{3} = 2^{\rho - 3} \frac{a^{-\rho + \lambda +1}}{b^{\lambda + 1}} 
\sum_{n} \frac{\phi_{n} }{\Gamma(-n)}
\Gamma \left( \frac{\rho + \nu - \lambda -1}{2} - n \right) 
\Gamma \left( \frac{\rho - \nu - \lambda -1}{2} - n \right)  \\
\Gamma \left( n + \lambda + \tfrac{1}{2} \right) 
\Gamma(n+ \tfrac{1}{2} ) 
\left( \frac{a^{2}}{b^{2}} \right)^{n}.
\end{multline*}
\noindent
This series has the value zero. This proves the next statement:

\begin{proposition}
The integral 
\begin{equation}
I(a,b;\nu,\lambda; \rho) = \int_{0}^{\infty} x^{\rho-1} K_{\nu}(ax) K_{\lambda}(bx) \, dx
\end{equation}
\noindent
is given by 
\begin{multline*}
I(a,b;\nu,\lambda; \rho)  = \\  
2^{\rho-3} \frac{b^{\nu- \rho} }{a^{\nu}}  \Gamma(\nu)
\Gamma \left( \frac{\rho - \nu + \lambda}{2} \right) 
\Gamma \left( \frac{\rho - \nu - \lambda}{2} \right)
\, \pFq21{ \frac{\rho-\nu + \lambda}{2} \,\, \frac{\rho - \nu - \lambda}{2}}{1 - \nu}{ \frac{a^{2}}{b^{2}}} \\
\hspace{0.25in} +  2^{\rho-3} \frac{a^{\nu}}{b^{\nu+\rho}} \Gamma(- \nu) 
\Gamma \left( \frac{\rho +\nu + \lambda}{2} \right) 
\Gamma \left( \frac{\rho + \nu - \lambda}{2} \right)
\, \pFq21{ \frac{\rho + \nu + \lambda}{2} \,\, \frac{\rho + \nu - \lambda}{2}}{1 + \nu}{ \frac{a^{2}}{b^{2}}}.
\end{multline*}
\end{proposition}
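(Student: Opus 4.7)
The plan is to apply the method of brackets directly to \eqref{int-kgen}, using the integral representation \cite[$8.432.6$]{gradshteyn-2015a} for $K_\nu(ax)$ (which, after expanding the exponential and power factors, gives a two-index bracket series) together with the totally null representation \eqref{k0-null} for $K_\lambda(bx)$. Substituting both into the integrand and interchanging summation with the outer $x$-integration produces the three-index bracket series \eqref{nice-brackets}, whose index equals $3 - 2 = 1$.

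Next I would analyse the $2 \times 3$ linear system arising from the vanishing of the two brackets, namely $n_1 - n_2 = \nu$ and $2n_2 - 2n_3 = -\rho - \nu + \lambda + 1$. The coefficient matrix has rank two, so Rule $E_3$ requires computing one candidate contribution for each choice of free index, giving three series $T_1, T_2, T_3$. In each case I would solve for the two pinned indices, substitute back into \eqref{nice-brackets}, and rewrite the resulting gamma quotients as Pochhammer symbols via \eqref{gamma-poch} in order to recognise a $_2F_1$ in the variable $a^{2}/b^{2}$.

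The decisive step is classifying the three contributions. With $n_1$ free or $n_2$ free one obtains the two convergent $_2F_1$ expressions appearing in the proposition; because both are series in the common argument $a^{2}/b^{2}$, Rule $E_3$ dictates that their values be \emph{added} rather than chosen between. With $n_3$ free the general coefficient carries a factor $1/\Gamma(-n)$ with $n \in \mathbb{N}$, so every term of that series vanishes; this is a totally null series in the sense of the introduction and contributes nothing.

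The main obstacle I anticipate is the bookkeeping: confirming that the asymmetric prefactors $b^{\nu-\rho}a^{-\nu}$ and $a^{\nu}b^{-\nu-\rho}$ still correspond to the \emph{same} expansion variable $a^{2}/b^{2}$ (so that addition is mandated rather than a choice), and tracking the symmetric gamma combinations $\Gamma\!\left(\tfrac{\rho \pm \nu \pm \lambda}{2}\right)$ that emerge from solving the two bracket equations. Once these are correctly assembled, the sum $T_1 + T_2 + 0$ yields the stated closed form.
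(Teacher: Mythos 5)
Your proposal is correct and follows essentially the same route as the paper: the same integral representation $8.432.6$ for $K_{\nu}(ax)$, the same totally null series \eqref{k0-null} for $K_{\lambda}(bx)$, the same index-one bracket series \eqref{nice-brackets} with the system $n_{1}-n_{2}=\nu$, $2n_{2}-2n_{3}=-\rho-\nu+\lambda+1$, and the same classification of the three contributions ($T_{1}$ and $T_{2}$ convergent in the common argument $a^{2}/b^{2}$ and therefore added, $T_{3}$ totally null and discarded). No gaps.
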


Some special cases of this evaluation are interesting in their own right.  Consider first the case $a=b$. Using 
Gauss' theorem \eqref{gauss-value} it follows that 
\begin{equation}
T_{1} = 
\frac{2^{\rho - 3}\,  \Gamma(\nu) \Gamma \left( \frac{\rho + \lambda - \nu}{2} \right) 
\Gamma \left( \frac{\rho - \lambda - \nu}{2} \right)
\Gamma(1- \nu) \Gamma(1- \rho)}
{ a^{\rho} \,\Gamma \left( 1 - \frac{\rho +\nu + \lambda }{2} \right) 
 \Gamma \left( 1 - \frac{\rho+\nu -  \lambda }{2} \right) }
 \end{equation}
 \noindent
 and 
 \begin{equation}
T_{2} = 
\frac{2^{\rho - 3}\,  \Gamma(-\nu) \Gamma \left( \frac{\rho + \lambda + \nu}{2} \right) 
\Gamma \left( \frac{\nu + \rho - \lambda }{2} \right)
\Gamma(\nu+1) \Gamma(1- \rho)}
{ a^{\rho} \,\Gamma \left( 1 - \frac{\rho -\nu - \lambda }{2} \right) 
 \Gamma \left( 1 - \frac{\rho - \nu +  \lambda }{2} \right) }.
 \end{equation}
 
 \begin{proposition}
 The integral 
\begin{equation}
J(a;\nu,\lambda; \rho) = \int_{0}^{\infty} x^{\rho-1} K_{\nu}(ax) K_{\lambda}(ax) \, dx
\end{equation}
\noindent
is given by 
\begin{multline*}
J(a;\nu,\lambda; \rho)  =  
\frac{2^{\rho - 3}\,  \Gamma(\nu) \Gamma \left( \frac{\rho + \lambda - \nu}{2} \right) 
\Gamma \left( \frac{\rho - \lambda - \nu}{2} \right)
\Gamma(1- \nu) \Gamma(1- \rho)}
{ a^{\rho} \,\Gamma \left( 1 - \frac{\rho +\nu + \lambda }{2} \right) 
 \Gamma \left( 1 - \frac{\rho+\nu -  \lambda }{2} \right) } + \\
\frac{2^{\rho - 3}\,  \Gamma(-\nu) \Gamma \left( \frac{\rho + \lambda + \nu}{2} \right) 
\Gamma \left( \frac{\nu + \rho - \lambda }{2} \right)
\Gamma(\nu+1) \Gamma(1- \rho)}
{ a^{\rho} \,\Gamma \left( 1 - \frac{\rho -\nu - \lambda }{2} \right) 
 \Gamma \left( 1 - \frac{\rho - \nu +  \lambda }{2} \right) }.
 \end{multline*}
 \end{proposition}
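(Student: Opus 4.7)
The plan is to obtain this proposition as a direct specialization $b=a$ of the preceding proposition, using Gauss's classical evaluation \eqref{gauss-value} of ${}_{2}F_{1}$ at unit argument. The preceding proposition already gives $I(a,b;\nu,\lambda;\rho)$ as the sum of two terms, each of the form prefactor times $\pFq21{\cdot\ \ \cdot}{\cdot}{a^{2}/b^{2}}$. Setting $b=a$ turns the hypergeometric argument into $1$, so closed forms for the two series are immediate.

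Concretely, I would first substitute $b=a$ into the two terms $T_{1}$ and $T_{2}$ displayed just above the proposition. The prefactor of $T_{1}$, namely $2^{\rho-3}b^{\nu-\rho}/a^{\nu}$, collapses to $2^{\rho-3}/a^{\rho}$, and similarly $a^{\nu}/b^{\nu+\rho}$ in $T_{2}$ collapses to $1/a^{\rho}$. Next I apply Gauss' formula \eqref{gauss-value} to the two hypergeometric factors:
\begin{equation*}
\pFq21{\frac{\rho-\nu+\lambda}{2}\ \ \frac{\rho-\nu-\lambda}{2}}{1-\nu}{1}
= \frac{\Gamma(1-\nu)\,\Gamma(1-\rho)}{\Gamma\!\left(1-\frac{\rho+\nu+\lambda}{2}\right)\,\Gamma\!\left(1-\frac{\rho+\nu-\lambda}{2}\right)},
\end{equation*}
\begin{equation*}
\pFq21{\frac{\rho+\nu+\lambda}{2}\ \ \frac{\rho+\nu-\lambda}{2}}{1+\nu}{1}
= \frac{\Gamma(1+\nu)\,\Gamma(1-\rho)}{\Gamma\!\left(1-\frac{\rho-\nu-\lambda}{2}\right)\,\Gamma\!\left(1-\frac{\rho-\nu+\lambda}{2}\right)}.
\end{equation*}
Multiplying these with the respective prefactors produces exactly the two summands in the stated closed form for $J(a;\nu,\lambda;\rho)$.

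The main obstacle is merely bookkeeping: one has to match six half-sum arguments $(\rho\pm\nu\pm\lambda)/2$ and the complementary arguments $1-(\cdot)$ arising from Gauss, without sign or symmetry errors. A mild analytic issue is the convergence hypothesis for Gauss' theorem, which requires $\operatorname{Re}(c-a-b)>0$; here $c-a-b=1-\rho$ in both cases, so the derivation is valid in the strip $\operatorname{Re}(\rho)<1$, and the full range follows by analytic continuation of both sides in $\rho$ (together with the usual strip of convergence of the original integral governed by $\operatorname{Re}(\rho)>|\operatorname{Re}\nu|+|\operatorname{Re}\lambda|$). No fresh use of the method of brackets is needed at this stage, since all the bracket-series work has already been done in the previous proposition.
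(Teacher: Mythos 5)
Your proposal is correct and follows essentially the same route as the paper: the authors likewise specialize the preceding proposition to $a=b$ and apply Gauss's evaluation \eqref{gauss-value} to the two $_{2}F_{1}$'s at unit argument to obtain the two displayed summands. Your added remark on the convergence condition $\realpart(1-\rho)>0$ and analytic continuation is a detail the paper omits but does not change the argument.
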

 
 The next special case is to take $a=b$ and $\lambda = \nu$. Then 
 \begin{equation}
 T_{1} = \frac{2^{\rho-3}}{a^{\rho}} 
 \frac{\Gamma(\nu) \Gamma \left(\frac{\rho}{2} \right) \Gamma \left( \frac{\rho}{2} - \nu \right) 
 \Gamma(1- \nu) \Gamma(1 - \rho)}
 { \Gamma \left( 1 - \frac{\rho}{2} - \nu \right) \Gamma \left( 1 - \frac{\rho}{2} \right)}
 \end{equation}
 \noindent
 and 
 \begin{equation}
 T_{2} = \frac{2^{\rho-3}}{a^{\rho}} 
 \frac{\Gamma(-\nu) \Gamma \left(\frac{\rho}{2} \right) \Gamma \left( \frac{\rho}{2} + \nu \right) 
 \Gamma(\nu+1) \Gamma(1 - \rho)}
 { \Gamma \left( 1 - \frac{\rho}{2} +  \nu \right) \Gamma \left( 1 - \frac{\rho}{2} \right)}.
 \end{equation}
 \noindent
 This proves the next result:
 
 \begin{proposition}
 The integral 
 \begin{equation}
 L(a; \nu, \rho)  =   \int_{0}^{\infty} x^{\rho-1} K_{\nu}^{2}(ax) \, dx 
 \end{equation}
 \noindent
 is given by 
 \begin{eqnarray*}
 L(a; \nu, \rho) 
& = &  \frac{2^{\rho-3}}{a^{\rho}} 
 \left[ \frac{\Gamma(\nu) \Gamma(1- \nu) \Gamma \left( \frac{\rho}{2} - \nu \right) }
 {\Gamma \left( 1 - \frac{\rho}{2} - \nu \right)} 
 + 
 \frac{\Gamma(-\nu) \Gamma(1+\nu) \Gamma \left( \frac{\rho}{2} + \nu \right) }
 {\Gamma \left( 1 - \frac{\rho}{2} + \nu \right)} 
 \right].
 \end{eqnarray*}
 \end{proposition}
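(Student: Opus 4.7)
The plan is to derive the stated formula for $L(a;\nu,\rho)$ as an immediate specialization of the preceding proposition on $J(a;\nu,\lambda;\rho)$, by setting $\lambda = \nu$. All the substantive bracket-series analysis has already been carried out: the general integral $I(a,b;\nu,\lambda;\rho)$ produced the three candidates $T_1,T_2,T_3$ with $T_3$ null, and the diagonal $a=b$ was handled by Gauss' summation \eqref{gauss-value} of the resulting $_{2}F_{1}(1)$. No new application of the method of brackets is required.

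First, I would apply the substitution $\lambda \mapsto \nu$ directly to the expressions for $T_1$ and $T_2$ displayed immediately before the proposition. Under this substitution the arguments collapse as $(\rho+\lambda-\nu)/2\mapsto \rho/2$ and $(\rho-\lambda-\nu)/2\mapsto \rho/2-\nu$ in the numerator of $T_1$, while $1-(\nu+\rho+\lambda)/2\mapsto 1-\rho/2-\nu$ and $1-(\nu+\rho-\lambda)/2\mapsto 1-\rho/2$ appear in the denominator; the analogous $\nu \mapsto -\nu$ replacements occur in $T_2$. Since $T_3$ contributes nothing, the method of brackets returns $L(a;\nu,\rho) = T_1 + T_2$.

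The remaining work is the gamma-function simplification bringing the two-term sum to the clean form stated. The two summands share a common prefactor built from $\Gamma(\rho/2)$, $\Gamma(1-\rho)$, and $\Gamma(1-\rho/2)$, which one reduces using Legendre duplication in the form $\Gamma(1-\rho) = 2^{-\rho}\pi^{-1/2}\,\Gamma(\tfrac{1}{2}-\tfrac{\rho}{2})\,\Gamma(1-\tfrac{\rho}{2})$ together with the Euler reflection identity $\Gamma(\rho/2)\Gamma(1-\rho/2) = \pi/\sin(\pi\rho/2)$. With this prefactor absorbed, the $\nu$-dependent ratios $\Gamma(\nu)\Gamma(1-\nu)\Gamma(\rho/2-\nu)/\Gamma(1-\rho/2-\nu)$ and its $\nu\mapsto -\nu$ partner appear exactly as in the statement.

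The main obstacle is precisely this gamma-function bookkeeping at the end. No new analytic input beyond the preceding proposition and the standard Legendre and Euler identities is needed; the substitution $\lambda=\nu$ itself is mechanical, and the $T_3=0$ observation from the general case carries over verbatim.
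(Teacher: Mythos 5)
Your overall route is exactly the paper's: set $a=b$ and $\lambda=\nu$ in the preceding propositions, note that $T_3$ is a totally null series, and take $L=T_1+T_2$ with the two displayed gamma quotients. The gap is in the final ``bookkeeping'' step, and it is not a removable one: the common prefactor $\Gamma(\tfrac{\rho}{2})\Gamma(1-\rho)/\Gamma(1-\tfrac{\rho}{2})$ is \emph{not} absorbed by the Legendre and Euler identities you invoke. Applying your own duplication formula gives
\[
\frac{\Gamma(\tfrac{\rho}{2})\,\Gamma(1-\rho)}{\Gamma(1-\tfrac{\rho}{2})}
=\frac{2^{-\rho}}{\sqrt{\pi}}\,\Gamma\!\left(\tfrac{\rho}{2}\right)\Gamma\!\left(\tfrac{1-\rho}{2}\right)
=\frac{\Gamma^{2}(\tfrac{\rho}{2})}{2\cos(\pi\rho/2)\,\Gamma(\rho)},
\]
which is not identically $1$ (at $\rho=\tfrac32$ it is about $-1.198$). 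If you carry it along honestly, reflection collapses the bracketed two-term sum in the statement to $2\cos(\pi\rho/2)\Gamma(\tfrac{\rho}{2}+\nu)\Gamma(\tfrac{\rho}{2}-\nu)$, so that $T_1+T_2$ equals the classical value $2^{\rho-3}a^{-\rho}\Gamma^{2}(\tfrac{\rho}{2})\Gamma(\tfrac{\rho}{2}+\nu)\Gamma(\tfrac{\rho}{2}-\nu)/\Gamma(\rho)$, whereas the formula as printed in the proposition equals $2^{\rho-2}a^{-\rho}\cos(\pi\rho/2)\Gamma(\tfrac{\rho}{2}+\nu)\Gamma(\tfrac{\rho}{2}-\nu)$. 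These are genuinely different functions of $\rho$: the latter vanishes as $\rho\to1$, while the former tends to $\pi^{2}/(4a\cos\pi\nu)$, which is the value the paper itself obtains for $\int_0^\infty K_\nu^2(ax)\,dx$ at the end of the same section. So the identity you would need to finish the argument is false, and no amount of duplication/reflection will produce the stated right-hand side from $T_1+T_2$.

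In fairness, the paper's own ``proof'' makes the identical silent jump: it displays $T_1$ and $T_2$ with the factor $\Gamma(\tfrac{\rho}{2})\Gamma(1-\rho)/\Gamma(1-\tfrac{\rho}{2})$ present and then states the proposition with that factor deleted and no comment. You have therefore reproduced the paper's argument faithfully, defect included. But as a derivation of the printed formula your last step cannot be carried out; the correct conclusion of the computation you describe is the expression with the prefactor retained (equivalently, $2^{\rho-3}\Gamma^{2}(\tfrac{\rho}{2})\Gamma(\tfrac{\rho}{2}+\nu)\Gamma(\tfrac{\rho}{2}-\nu)/(a^{\rho}\Gamma(\rho))$), not the proposition as stated.
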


\medskip 

The last special case is $\rho=1$; that is, the integral 
\begin{equation}
M(a,b;\nu,\lambda) = \int_{0}^{\infty} K_{\nu}(ax) K_{\lambda}(bx) \, dx.
\end{equation}
\noindent
It is shown that the usual application of the method of brackets yield only divergent series, so a new approach 
is required. 

The argument begins with converting the brackets series in \eqref{nice-brackets}  to 
\begin{multline}
M(a,b;\nu,\lambda) = \sum_{n_{1},n_{2},n_{3}} \phi_{n_{1},n_{2},n_{3}}
\frac{a^{2n_{2}+\nu} 2^{\lambda - \nu -1 + 2n_{3} - 2n_{2}} \Gamma(n_{3}+ \lambda + \tfrac{1}{2}) 
\Gamma(n_{3} + \tfrac{1}{2} )}{b^{2n_{3} + \lambda +1} \, \Gamma(-n_{3})} \label{nice-brackets1} \\  
\times \langle n_{1}-n_{2} - \nu \rangle \langle  \nu - \lambda + 2n_{2} - 2n_{3}  \rangle.
\end{multline}
\noindent
A routine application of the method of brackets gives three series 
\begin{eqnarray*}
T_{1} & = & \frac{b^{\nu-1}}{4a^{\nu}} 
\Gamma \left( \frac{1-\nu+\lambda}{2} \right) \Gamma \left( \frac{1- \nu - \lambda}{2} \right) \Gamma(\nu) 
\pFq21{ \frac{1 - \nu + \lambda}{2} \,\,\, \frac{1 - \nu - \lambda}{2}}{1-\nu}{ \frac{a^{2}}{b^{2}}} \nonumber  \\
T_{2} & = & \frac{a^{\nu}}{4b^{\nu+1}} 
\Gamma \left( \frac{1+\nu+\lambda}{2} \right) \Gamma \left( \frac{1+\nu - \lambda}{2} \right) \Gamma(-\nu) 
\pFq21{ \frac{1 + \nu + \lambda}{2} \,\,\, \frac{\nu - \lambda+1}{2}}{1+\nu}{ \frac{a^{2}}{b^{2}}} \nonumber
\end{eqnarray*}
\noindent
and a totally null series $T_{3}$. Gauss' value \eqref{gauss-value} shows that $T_{1}$ and $T_{2}$ diverge 
when $a \to b$.  Therefore \eqref{nice-brackets1} is replaced by 
\begin{multline}
M(a,b;\nu,\lambda) = \lim\limits_{\varepsilon \to 0} \sum_{n_{1},n_{2},n_{3}} \phi_{n_{1},n_{2},n_{3}}
\frac{a^{2n_{2}+\nu} 2^{\lambda - \nu -1 + 2n_{3} - 2n_{2}} \Gamma(n_{3}+ \lambda + \tfrac{1}{2}) 
\Gamma(n_{3} + \tfrac{1}{2} )}{b^{2n_{3} + \lambda +1} \, \Gamma(-n_{3})} \label{nice-brackets2} \\  
\times \langle n_{1}-n_{2} - \nu  + \varepsilon \rangle \langle  \nu - \lambda + 2n_{2} - 2n_{3}  \rangle.
\end{multline}
\noindent
Proceeding as before produces a null series that is discarded and also 
\begin{eqnarray*}
T_{1} & = & \frac{a^{-\nu+2 \varepsilon}}{4 b^{1 - \nu + 2 \varepsilon}} 
\Gamma(\nu - \varepsilon) \Gamma \left( \frac{1 + \lambda - \nu}{2} + \varepsilon \right)  
\Gamma \left( \frac{1 - \lambda - \nu}{2} + \varepsilon \right)  \\
& & \quad \times  \pFq21{ \frac{1 - \nu + \lambda}{2} + \varepsilon \,\,\, 
\frac{1 - \nu - \lambda}{2} + \varepsilon}{1 - \nu + \varepsilon}{ \frac{a^{2}}{b^{2}}} \\
T_{2} & = & \frac{a^{\nu}}{4 b^{1 + \nu} }
\Gamma(-\nu + \varepsilon) \Gamma \left( \frac{1 + \lambda + \nu}{2} \right) 
\Gamma \left( \frac{1 - \lambda + \nu}{2}  \right)  \\
& & \quad  \times \pFq21{ \frac{1 + \nu - \lambda}{2}  \,\,\, 
\frac{1 + \nu + \lambda}{2}}{1 + \nu - \varepsilon}{ \frac{a^{2}}{b^{2}}}.
\end{eqnarray*}
\noindent
In the limit as $b \to a$, these become 
\begin{eqnarray*}
T_{1} & = &  \frac{\Gamma(\nu- \varepsilon) \Gamma \left( \frac{1 + \lambda - \nu}{2} + \varepsilon \right) 
\Gamma \left( \frac{1 - \lambda - \nu}{2} + \varepsilon \right) 
\Gamma(1 - \nu + \varepsilon ) \Gamma( - \varepsilon) }
{ 4a \, \Gamma \left( \frac{1 - \nu - \lambda}{2}  \right) 
\Gamma \left( \frac{1 - \nu + \lambda}{2}  \right)} \\
T_{2} & = & 
\frac{\Gamma(-\nu+ \varepsilon) \Gamma \left( \frac{1 + \lambda +\nu}{2} \right) 
\Gamma \left( \frac{1 - \lambda + \nu}{2} \right) 
\Gamma(1 + \nu - \varepsilon ) \Gamma( - \varepsilon) }
{ 4a \, \Gamma \left( \frac{1 + \nu + \lambda}{2} - \varepsilon \right) 
\Gamma \left( \frac{1 + \nu - \lambda}{2} - \varepsilon \right)}.
\end{eqnarray*}
\noindent
Passing to the limit as $\varepsilon \to 0$ gives 
\begin{equation}
  \int_{0}^{\infty} K_{\nu}(ax) K_{\lambda}(ax) \, dx 
  =  \frac{\pi^{2}}{4 a \sin \pi \nu} \left[ \tan \left( \frac{\pi}{2}(\lambda + \nu) \right) - 
 \tan \left( \frac{\pi}{2}( \lambda - \nu) \right) \right]. 
 \end{equation}
 \noindent
 In the special case $\lambda = \nu$, it follows that 
 \begin{equation}
 \int_{0}^{\infty} K_{\nu}^{2}(ax) \, dx = \frac{\pi^{2}}{4a \cos \pi \nu }, \text{ valid for } | \nu | < \tfrac{1}{2}.
\end{equation}
\noindent
This value generalizes \eqref{k0-squared}. It appears in Prudnikov et al. \cite{prudnikov-1986a} as 
entries $.2.16.28.3$ and $2.16.33.2$.
\end{example}

\section{An example with an integral producing the Bessel function}
\label{sec-producing}

The evaluation of integrals in Section \ref{sec-bessel-nu} contain the Bessel function $K_{\nu}$ in the 
integrand. This section uses the method developed in the current work to evaluate some entries  in 
\cite{gradshteyn-2015a} where the answer involves  $K_{0}$. 

\begin{example}
\label{ex-rule-e4}
The first example is entry $6.532.4$ in \cite{gradshteyn-2015a}
\begin{equation}
\label{gr-65324}
\int_{0}^{\infty} \frac{x J_{0}(ax)}{x^{2}+b^{2}} \, dx = K_{0}(ab).
\end{equation}
The analysis begins with the series 
\begin{eqnarray}
J_{0}(ax) & = &  \sum_{n=0}^{\infty} \frac{1}{n!^{2}} \left(  - \frac{a^{2}x^{2}}{4} \right)^{n} \\
 & = & \sum_{n_{1}=0}^{\infty} \phi_{n_{1}} \frac{  a^{2n_{1}}} {2^{2n_{1}}  \, \Gamma(n_{1}+ 1)} x^{2n_{1}}
 \nonumber 
 \end{eqnarray}
 
 Rule $P_{2}$ gives 
 \begin{equation}
 \frac{1}{x^{2}+b^{2}} = \sum_{n_{2},n_{3}} \phi_{n_{2},n_{3}} x^{2n_{2}} b^{2n_{3}} 
 \langle 1 + n_{2} + n_{3} \rangle.
 \end{equation}
 \noindent
 Therefore 
 \begin{equation}
 \int_{0}^{\infty} \frac{x J_{0}(ax) }{x^{2}+b^{2}} \, dx  = 
 \sum_{n_{1},n_{2},n_{3}} \phi_{n_{1},n_{2},n_{3}} 
 \frac{a^{2n_{1}} b^{2n_{3}}}{2^{2n_{1}} \, \Gamma(n_{1}+1)} 
 \langle 1+n_{2}+n_{3} \rangle 
 \langle 2 + 2n_{1} + 2n_{2} \rangle.
 \end{equation}
 \noindent
 The method of brackets produces three series as candidates for solutions, one per free index $n_{1}, \, n_{2}, \, 
 n_{3}$:
 \begin{eqnarray}
 T_{1} & = & \frac{1}{2} \sum_{n=0}^{\infty} \phi_{n} \Gamma(-n) \left( \frac{a^{2}b^{2}}{4} \right)^{n} \\
 T_{2} & = & \frac{2}{a^{2}b^{2}} \sum_{n=0}^{\infty} \phi_{n} 
 \frac{\Gamma^{2}(n) }{\Gamma(-n)} \left( \frac{4}{a^{2}b^{2}} \right)^{n}  \nonumber \\
 T_{3} & = &   \frac{1}{2} \sum_{n=0}^{\infty} \phi_{n} \Gamma(-n) \left( \frac{a^{2}b^{2}}{4} \right)^{n}.  \nonumber 
 \end{eqnarray}
 \noindent
 The fact that $T_{1} = T_{3}$ and using Rule $E_{4}$ shows that only one of these series has to be counted. Since 
 $T_{1}$ and $T_{2}$ are non-classical series of \textit{distinct variables}, both are representations of the 
 value of the integral.  Observe 
 that $T_{2}$ is the totally null representation of $K_{0}(ab)$ given in \eqref{k0-divergent}. This confirms 
 \eqref{gr-65324}.  The fact that $T_{3}$ is also a value for the integral gives another totally divergent 
 representation for $K_{0}$:
 \begin{equation}
 \label{new-k0}
 K_{0}(x) = \frac{2}{x^{2}} \sum_{n=0}^{\infty} \phi_{n} \frac{\Gamma^{2}(n+1)}{\Gamma(-n)} 
 \left( \frac{4}{x^{2}} \right)^{n}.
 \end{equation}
 \noindent
 To test its validity, the integral in Example \ref{ex-k0-1} is evaluated again, this time using \eqref{new-k0}:
 \begin{eqnarray}
 \int_{0}^{\infty} K_{0}(x) dx & = & 
 \int_{0}^{\infty} \frac{2}{x^{2}} \sum_{n} \phi_{n} \frac{\Gamma^{2}(n+1)}{\Gamma(-n)} 
 2^{2n} x^{-2n} \\
 & = & \sum_{n} \phi_{n} 2^{2n+1} \frac{\Gamma^{2}(n+1)}{\Gamma(-n)} \int_{0}^{\infty} x^{-2n-2} \, dx \nonumber \\
 & = & \sum_{n} \phi_{n} 2^{2n+1} \frac{\Gamma^{2}(n+1)}{\Gamma(-n)} \langle -2n-1 \rangle.  \nonumber
 \end{eqnarray}
 \noindent
 The bracket series is evaluated using Rule $E_{1}$ to confirm \eqref{value-k0-1}.
 \end{example}
 
 \begin{example}
 Entry $6.226.2$ in \cite{gradshteyn-2015a} is 
 \begin{equation}
 \label{62262}
 \int_{0}^{\infty} {\rm{Ei}}\left(- \frac{a^{2}}{4x} \right) e^{-\mu x} \, dx = - \frac{2}{\mu} K_{0}(a \sqrt{\mu}).
 \end{equation}
 \noindent
 The evaluation starts with the partially divergent series \eqref{pds-ei1}
 \begin{equation}
 {\rm{Ei}}\left( - \frac{a^{2}}{4x} \right) = \sum_{n_{1}=0}^{\infty} \phi_{n_{1}} \frac{a^{2n_{1}}}{n_{1} 2^{2n_{1}}}
 \frac{1}{x^{n_{1}}}
 \end{equation}
 \noindent
 and this yields 
 \begin{equation}
  \int_{0}^{\infty} {\rm{Ei}}\left(- \frac{a^{2}}{4x} \right) e^{-\mu x} \, dx = 
  \sum_{n_{1},n_{2}} \phi_{n_{1}n_{2}} \frac{a^{2n_{1}} \mu^{n_{2}}}{n_{1} 2^{2n_{1}}} \langle n_{2} - n_{1} + 1 \rangle.
  \end{equation}
  \noindent
  The method of brackets gives two series. The first one 
  \begin{eqnarray} 
  T_{1} & = & \frac{1}{\mu} \sum_{n_{1}} \phi_{n_{1}} \frac{\Gamma(1-n_{1})}{n_{1}2^{2n_{1}}} (a^{2} \mu)^{n_{1}}
  \label{form-t1}  \\
  & = & - \frac{1}{\mu} \sum_{n_{1}} \phi_{n_{1}}  \frac{\Gamma(-n_{1}) }{2^{2n_{1}}} (a^{2} \mu)^{n_{1}} \nonumber \\
  & = & - \frac{2}{\mu} K_{0}( a \sqrt{\mu}), \nonumber 
  \end{eqnarray}
  \noindent
 using \eqref{k0-divergent}.  The second series is 
  \begin{equation}
  T_{2}  =   \sum_{n_{2}}\phi_{n_{2}} \frac{a^{2n_{2}+2} \mu^{n_{2}}}{(n_{2}+1) 2^{2(n_{2}+1)}}
  \Gamma(-n_{2}-1).
  \end{equation}
  \noindent
  Now shift the index by $m = n_{2}+1$ to obtain 
  \begin{eqnarray}
  T_{2}  & = & \sum_{m} \phi_{m-1} \frac{a^{2m} \mu^{m-1}}{m2^{2m}}
  \Gamma(-m). \nonumber  \\
  & = & - \frac{1}{\mu} \sum_{m} \phi_{m}  \Gamma(-m) \frac{a^{2m} \mu^{m}}{2^{2m}}. \nonumber 
  \end{eqnarray}
  \noindent
  This is the same sum as $T_{1}$ in the second line of \eqref{form-t1}. Recall  that the 
  summation indices are placed after the conversion of 
  the indicator $\phi_{n_{2}}$ to its expression in terms of the gamma function. According to Rule $E_{4}$, the 
  sum $T_{2}$  is discarded.  This establishes \eqref{62262}. 
 \end{example}

\section{A new use of the method of brackets}
\label{sec-new-use}
\setcounter{equation}{0}

This section introduces a procedure to evaluate integrals of the form 
\begin{equation}
I(a_{1},a_{2}) = \int_{0}^{\infty} f_{1}(a_{1}x) f_{2}(a_{2}x) \, dx.
\end{equation}

Differentiating with respect to the parameters leads to 
\begin{equation}
a_{1} \frac{\partial I(a_{1},a_{2})}{\partial a_{1}} + 
a_{2} \frac{\partial I(a_{1},a_{2})}{\partial a_{2}}  = \int_{0}^{\infty} 
x \frac{d}{dx} \left[ f_{1}(a_{1}x) f_{2}(a_{2} x) \right] \, dx.
\end{equation}
\noindent
Integration by parts produces 
\begin{equation}
\label{formula-ode1}
I(a_{1},a_{2}) = x f_{1}(a_{1}x)f_{2}(a_{2}x)\Big{|}_{0}^{\infty} - 
\left( a_{1} \frac{\partial I(a_{1},a_{2})}{\partial a_{1}} + 
a_{2} \frac{\partial I(a_{1},a_{2})}{\partial a_{2}} \right).
\end{equation}
\noindent
A direct extension to many parameters leads to the following result.

\begin{theorem}
\label{nice-1}
Let 
\begin{equation}
I(a_{1},\cdots,a_{n}) = \int_{0}^{\infty} \prod_{j=1}^{n} f(a_{j}x) \, dx.
\end{equation}
\noindent
Then 
\begin{equation}
\label{nice-form1}
I(a_{1},\cdots, a_{n}) = x \prod_{j=1}^{n} f_{j}(a_{j}x)\Big{|}_{0}^{\infty} - 
\sum_{j=1}^{n}  a_{j} \frac{\partial I(a_{1},\cdots, a_{n})}{\partial a_{j}}.
\end{equation}
\end{theorem}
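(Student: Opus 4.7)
The plan is to repeat the derivation that precedes the statement, which handled the case $n=2$ via the single identity \eqref{formula-ode1}, but to do so for an arbitrary number of parameters. The pivot of the argument is the scaling identity
\begin{equation}
\sum_{j=1}^{n} a_{j} \frac{\partial}{\partial a_{j}} \prod_{k=1}^{n} f_{k}(a_{k} x) = x \frac{d}{dx} \prod_{k=1}^{n} f_{k}(a_{k} x),
\end{equation}
which reflects the fact that the integrand depends on $(a_{1},\ldots,a_{n},x)$ only through the products $u_{k}=a_{k}x$. Both sides unpack, via the chain rule, to $\sum_{k} u_{k}\, F_{k}(u_{1},\ldots,u_{n})$, where $F_{k}$ denotes the partial derivative of $\prod_{j} f_{j}$ with respect to its $k$-th slot, so the identity is immediate.

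First I would differentiate $I(a_{1},\ldots,a_{n})$ under the integral sign with respect to each parameter and combine the results using the identity above to obtain
\begin{equation}
\sum_{j=1}^{n} a_{j} \frac{\partial I}{\partial a_{j}} = \int_{0}^{\infty} x \frac{d}{dx} \left[ \prod_{k=1}^{n} f_{k}(a_{k} x) \right] dx.
\end{equation}
A single integration by parts then gives
\begin{equation}
\int_{0}^{\infty} x \frac{d}{dx} \left[ \prod_{k=1}^{n} f_{k}(a_{k} x) \right] dx = x \prod_{k=1}^{n} f_{k}(a_{k}x)\Big|_{0}^{\infty} - \int_{0}^{\infty} \prod_{k=1}^{n} f_{k}(a_{k} x)\, dx,
\end{equation}
and the last integral is precisely $I(a_{1},\ldots,a_{n})$. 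Rearranging yields \eqref{nice-form1}.

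The main obstacle, as already implicit in the two-parameter version \eqref{formula-ode1}, is not algebraic but analytic: one must justify differentiation under the integral sign and ensure the boundary term $x\prod_{k} f_{k}(a_{k}x)|_{0}^{\infty}$ is meaningful. Under the working hypotheses on admissible integrands described in Note \ref{note-required}, the interchange is legitimate and the boundary evaluation is carried out in the same regularized spirit as the bracket $\langle a \rangle$ itself, so the formal identity propagates consistently through the method-of-brackets formalism.
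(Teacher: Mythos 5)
Your argument is correct and is essentially the paper's own: the paper derives the $n=2$ case by differentiating under the integral sign, applying the Euler scaling identity, and integrating by parts, then simply asserts that "a direct extension to many parameters" gives the theorem, which is exactly the extension you carry out. Your added remarks on justifying the interchange and the boundary term go slightly beyond the paper's heuristic treatment but do not change the route.
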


\begin{example}
The integral 
\begin{equation}
I(a,b) = \int_{0}^{\infty} e^{-ax} J_{0}(bx) \, dx 
\end{equation}
\noindent
is evaluated first by a direct application of the method of brackets and then using Theorem \ref{nice-1}. 

\smallskip

The  bracket series for $I(a,b)$  
\begin{equation}
I(a,b) = \sum_{n_{1},n_{2}} \phi_{n_{1},n_{2}} \frac{a^{n_{1}} b^{2n_{2}}}{2^{2n_{2}} \Gamma(n_{2}+1)}
\langle n_{1} + 2n_{2}+1 \rangle
\end{equation}
\noindent 
is obtained directly from \eqref{exp-bracket}
\begin{equation}
e^{-ax} = \sum_{n_{1}} \phi_{n_{1}} a^{n_{1}} x^{n_{1}}
\end{equation}
\noindent 
and 
\begin{equation}
J_{0}(bx) = \pFq01{-}{1}{- \frac{(bx)^{2}}{4}} = \sum_{n_{2}} \phi_{n_{2}}
\frac{b^{2n_{2}}}{\Gamma(n_{2}+1) 2^{2n_{2}}}
x^{2n_{2}}.
\end{equation}
\noindent
Solving for $n_{1}$ in the equation coming from the vanishing of the bracket gives $n_{1} = -2n_{2}-1$, which yields 
\begin{equation}
T_{1} = \sum_{n_{2}=0}^{\infty} \frac{(-1)^{n_{2}}}{n_{2}!}
\frac{a^{-2n_{2}-1} b^{2n_{2}}}{2^{2n_{2}}} \frac{\Gamma(2n_{2}+1)}{\Gamma(n_{2}+1)}.
\end{equation}
\noindent
To simplify this sum transform the gamma factors via \eqref{gamma-poch} 
and use the duplication formula \eqref{poch-dupl} to produce 
\begin{equation}
T_{1} = \frac{1}{a} \sum_{n_{2}=0}^{\infty} \frac{ \left(\tfrac{1}{2} \right)_{n_{2}}}{n_{2}!} 
\left( - \frac{b^{2}}{a^{2}} \right)^{n} = 
\frac{1}{a} \pFq10{\frac{1}{2}}{-}{- \frac{b^{2}}{a^{2}}}.
\end{equation}
\noindent
The identity $\displaystyle{\pFq10{c}{-}{z}  = (1-z)^{-c}}$ gives 
$\displaystyle{T_{1} = \frac{1}{\sqrt{a^{2}+b^{2}}}.}$ A direct calculation shows that the series obtained from 
solving for $n_{2}$ yields the same solution,  so it discarded. Therefore 
\begin{equation}
\int_{0}^{\infty} e^{-ax} J_{0}(bx) \, dx = \frac{1}{\sqrt{a^{2}+b^{2}}}.
\label{bessel-j0}
\end{equation}

\smallskip

The  evaluation of  this integral using Theorem \ref{nice-1} begins with checking that the boundary terms 
vanish. This comes from the asymptotic behavior $J_{0}(x) \sim 1$ as $x \to 0$ and 
$\displaystyle{J_{0}(x) \sim \sqrt{\frac{2}{\pi x}} \cos x }$ as $x \to \infty$. The term 
\begin{equation}
a \frac{\partial I(a,b)}{\partial a} = \sum_{n_{1},n_{2}} 
\phi_{n_{1}n_{2}} \frac{n_{1} a^{n_{1}} b^{2n_{2}} }{2^{2n_{2}} \Gamma(n_{2}+1)} 
\langle n_{1} + 2n_{2} + 1 \rangle.
\end{equation}
\noindent
This generates two series 
\begin{equation}
T_{1} = \frac{1}{b} \sum_{n=0}^{\infty} \frac{(-1)^{n}}{n!}  \frac{n \Gamma\left( \tfrac{1+n}{2} \right)}{\Gamma\left( 
\tfrac{1-n}{2} \right)} \left( \frac{2a}{b} \right)^{n}
\end{equation}
\noindent
and 
\begin{equation}
T_{2} = - \frac{1}{a} \sum_{n=0}^{\infty} \frac{(-1)^{n}}{n!}  \frac{\Gamma(2n+2)}{\Gamma(n+1)} 
\left( \frac{b^{2}}{4a^{2}} \right)^{n}.
\end{equation}

Similarly
\begin{equation}
b \frac{\partial I(a,b)}{\partial b} = 2 \sum_{n_{1},n_{2}} \phi_{n_{1},n_{2}} 
\frac{n_{2} a^{n_{1}} b^{2n_{2}}}{2^{2n_{2}} \Gamma(n_{2}+1)} \langle n_{1} + 2n_{2} + 1 \rangle
\end{equation}
\noindent
which yields the two series 
\begin{eqnarray*}
\tilde{T}_{1} & = & \frac{2}{b} \sum_{n=0}^{\infty}  
\phi_{n} \frac{\Gamma \left( \frac{n+1}{2} \right) \Gamma \left( - \frac{n+1}{2} \right) }
{ \Gamma \left( \frac{1-n}{2} \right)}  \left( \frac{2a}{b} \right)^{n} \\
\tilde{T}_{2} & = & \frac{2}{a} \sum_{n=0}^{\infty} \frac{2}{a} \sum_{n=0}^{\infty} \phi_{n} 
\frac{n \Gamma(2n+1)}{\Gamma(n+1)} \left( \frac{b^{2}}{4a^{2}} \right)^{n}
\end{eqnarray*}
\noindent
Since the boundary terms vanish, the relation \eqref{formula-ode1} gives 
\begin{equation}
I(a,b) = \begin{cases} 
-T_{1} - \tilde{T}_{1}, & \quad |4a^{2}| < |b^{2}| \\
-T_{2} - \tilde{T}_{2}, & \quad |b^{2}| < |4a^{2}|.
\end{cases}
\end{equation}

The form $T_{2}+\tilde{T}_{2}$ is simplified by converting them to hypergeometric form to produce 
\begin{eqnarray}
T_{2} & = & - \frac{1}{a} \sum_{n=0}^{\infty} 
\frac{(-1)^{n}}{n!} \frac{\Gamma(2n+2)}{\Gamma(n+1)} \left( \frac{b^{2}}{4a^{2}} \right)^{n} 
= - \frac{a^{2}}{(a^{2}+b^{2})^{3/2}} \\
\tilde{T}_{2} & = & \frac{2}{a} \sum_{n=0}^{\infty} \phi_{n} 
\frac{n \Gamma(2n+1)}{\Gamma(n+1)} \left( \frac{b^{2}}{4a^{2}} \right)^{n} = 
- \frac{b^{2}}{(a^{2}+b^{2})^{3/2}}.  \nonumber 
\end{eqnarray}
\noindent
Then 
\begin{equation}
I(a,b) = - T_{2} - \tilde{T}_{2} = \frac{a^{2}}{(a^{2}+b^{2})^{3/2}} + \frac{b^{2}}{(a^{2}+b^{2})^{3/2}} = 
\frac{1}{\sqrt{a^{2}+b^{2}}}.
\end{equation}
\noindent
This gives
\begin{equation}
I(a,b) = \int_{0}^{\infty} e^{-ax} J_{0}(bx) \, dx = \frac{1}{\sqrt{a^{2}+b^{2}}}.
\end{equation}
\noindent
The option $T_{1}+ \tilde{T}_{1}$ gives the same result.
\end{example}

\begin{example}
\label{example-6-222}
Entry $6.222$ in \cite{gradshteyn-2015a} is 
\begin{eqnarray}
\label{ei-double}
I(a_{1},a_{2}) & = &  \int_{0}^{\infty} \text{Ei}(-a_{1}x) \text{Ei}(-a_{2}x) \, dx  \\
& = & \left( \frac{1}{a_{1}} + \frac{1}{a_{2}} \right) \ln(a_{1}+a_{2})  - \frac{\ln a_{1}}{a_{2}} - \frac{\ln a_{2}}{a_{1}}.
\nonumber
\end{eqnarray}
\noindent
In particular 
\begin{equation}
\int_{0}^{\infty} \text{Ei}^{2}(-ax) \, dx =  \frac{2 \ln 2 }{a}.
\end{equation}

The evaluation of this integral by the method of brackets  begins with the partially divergent series 
for $\text{Ei}(-x)$  which  yields (using \eqref{ei-null1} $ = $ \eqref{pds-ei1}):
\begin{equation}
\label{sum-1a}
I(a_{1},a_{2}) = \sum_{n_{1},n_{2}} \phi_{n_{1},n_{2}} \frac{a_{1}^{n_{1}} a_{2}^{n_{2}}}{n_{1}n_{2}} 
\langle n_{1}+n_{2} + 1\rangle.
\end{equation}
\noindent
The usual procedure requires the relation $n_{1}+n_{2}+1 = 0$ and taking $n_{1}$ as the free parameter gives 
\begin{equation}
I_{1}(a_{1},a_{2}) = - \frac{1}{a_{2}} \sum_{n_{1}=0}^{\infty} \frac{(-1)^{n_{1}}}{n_{1}(n_{1}+1)} 
\left( \frac{a_{1}}{a_{2}} \right)^{n_{1}},
\end{equation}
\noindent
and when $n_{2}$ as free parameter one obtains the series 
\begin{equation}
I_{2}(a_{1},a_{2}) = - \frac{1}{a_{1}} \sum_{n_{2}=0}^{\infty} \frac{(-1)^{n_{2}}}{n_{2}(n_{2}+1)} 
\left( \frac{a_{2}}{a_{1}} \right)^{n_{2}}.
\end{equation}
\noindent
These two series correspond to different expansions: the first one in $x = a_{1}/a_{2}$ and the second one in 
$x^{-1}= a_{2}/a_{1}$.  Both series are partially divergent, so the Rule $E_{3}$ states that these sums must be 
discarded. The usual method of brackets fails for this problem.

The solution using Theorem \ref{nice-1} is described next. An elementary argument shows that 
$x {\rm{Ei}}(-x) \to 0 $ as $x \to 0$ or $\infty$. Then \eqref{formula-ode1} becomes 
\begin{eqnarray}
& & \\
I(a_{1},a_{2}) & = &  - a_{1} \frac{\partial I}{\partial a_{1}} - a_{2} \frac{\partial I}{\partial a_{2}}  \nonumber \\
& = & - \sum_{n_{1},n_{2}} \phi_{n_{1},n_{2}} \frac{a_{1}^{n_{1}} a_{2}^{n_{2}}}{n_{2}} 
\langle n_{1} + n_{2} + 1 \rangle - 
\sum_{n_{1},n_{2}} \phi_{n_{1},n_{2}} \frac{a_{1}^{n_{1}} a_{2}^{n_{2}}}{n_{1}} 
\langle n_{1} + n_{2} + 1 \rangle, \nonumber  \\
& \equiv & S_{1} + S_{2}, \nonumber
\end{eqnarray} 
\noindent
using \eqref{sum-1a} to compute the partial derivatives.  The method of brackets gives two series for each of the 
sums $S_{1}$ and $S_{2}$:
\begin{eqnarray}
T_{1,1} & = &  \frac{1}{a_{2}} \sum_{n=0}^{\infty} \frac{(-1)^{n}}{n+1} \left( \frac{a_{1}}{a_{2}} \right)^{n}  \\
T_{1,2} & = & - \frac{1}{a_{1}} \sum_{n=0}^{\infty} \frac{(-1)^{n}}{n} \left( \frac{a_{2}}{a_{1}} \right)^{n} \\
T_{2,1} & = & - \frac{1}{a_{2}} \sum_{n=0}^{\infty} \frac{(-1)^{n}}{n} \left( \frac{a_{1}}{a_{2}} \right)^{n} \\
T_{2,2} & = &  \frac{1}{a_{1}} \sum_{n=0}^{\infty} \frac{(-1)^{n}}{n+1} \left( \frac{a_{2}}{a_{1}} \right)^{n}, 
\end{eqnarray}
the series $T_{1,1}$ and $T_{1,2}$ come from the first sum $S_{1}$ and $T_{2,1}, \, T_{2,2}$ from $S_{2}$.  Rule 
$E_{3}$ indicates that the value of the integral is either 
\begin{equation}
\label{mess-1}
I(a_{1},a_{2}) = T_{1,1} + T_{2,1} \quad \text{ or } \quad I(a_{1},a_{2}) = T_{1,2} + T_{2,2};
\end{equation}
the first form is an expression in $a_{1}/a_{2}$ and the second one in $a_{2}/a_{1}$.  

The series $T_{1,1}$ is convergent when $|a_{1} | < |a_{2}|$ and it produces the function 
\begin{equation}
f(a_{1},a_{2}) = \frac{1}{a_{1}} \log \left( 1 + \frac{a_{1}}{a_{2}} \right)
\end{equation}
\noindent
and $T_{2,2}$ is also convergent and is gives 
\begin{equation}
g(a_{1},a_{2}) = \frac{1}{a_{2}} \log \left( 1 + \frac{a_{2}}{a_{1}} \right).
\end{equation}
\noindent
Observe that, according to \eqref{mess-1} to complete the evaluation of $I(a_{1},a_{2})$,  some of the series required are  
partially divergent series. The question is how to make sense of these divergent series. The solution  proposed here is, for instance, 
 to interpret 
$T_{2,1}$ as a partially divergent series 
attached to the function $g(a_{1},a_{2})$. Therefore, the sum in \eqref{mess-1}, the term $T_{2,1}$ is 
replaced by $g(a_{1},a_{2})$ to produce 
\begin{eqnarray} 
I(a_{1},a_{2}) & = & f(a_{1},a_{2}) + g(a_{1},a_{2}) \\
 & = & \frac{1}{a_{1}} \log \left( 1 + \frac{a_{1}}{a_{2}} \right) +  \frac{1}{a_{2}} \log \left( 1 + \frac{a_{2}}{a_{1}} \right),
 \nonumber
 \end{eqnarray}
 and this confirms \eqref{ei-double}.  A similar interpretation of $T_{1,2} + T_{2,2}$ gives the same result.
\end{example}

 \section{Conclusions}
 \label{sec-conclusions}
 
 The method of brackets consists of a small number of heuristic rules used for the evaluation of definite integrals on 
 $[0, \, + \infty)$. The original formulation of the method applied to functions that admit an expansion of the form 
 $\begin{displaystyle} \sum_{n=0}^{\infty} a(n) x^{\alpha n + \beta - 1} \end{displaystyle}$.  The results 
 presented here extend this method to 
 functions, like the Bessel function $K_{\nu}$ and the exponential integral $\text{Ei}$, where the expansions have expansions of the 
 form $\begin{displaystyle} \sum_{n=0}^{\infty} \Gamma(-n) x^{n} \end{displaystyle}$ (where all the coefficients are divergent) 
 or $\begin{displaystyle} \sum_{n=0}^{\infty} \frac{1}{\Gamma(-n)} x^{n} \end{displaystyle}$ (where all the coefficients vanish). 
 A variety of examples illustrate the validity of this formal procedure.

\medskip

\noindent
\textbf{Acknowledgments.} The authors wish to thank a referee for a careful reading of the 
original version of the paper. The first author thanks the support of the Centro de Astrof\'{i}sica de 
Valparaiso. The last author acknowledges the partial support of NSF-DMS 1112656. 

\bigskip


\end{document}